\newfont{\msam}{msam10}
\newtheorem{theorem}[]{Theorem}
\newtheorem{proposition}[]{Proposition}
\newtheorem{corollary}[]{Corollary}
\newtheorem{lemma}[]{Lemma}
\theoremstyle{definition}
\newtheorem{prop}[]{Proposition}
\newtheorem{cor}[]{Corollary}
\let\nc\newcommand
\nc{\la}{\label}
\def\bthm{\begin{theorem}}
\def\ethm{\end{theorem}}
\def\blemma{\begin{lemma}}
\def\elemma{\end{lemma}}
\def\bproof{\begin{proof}}
\def\eproof{\end{proof}}
\def\bprop{\begin{proposition}}
\def\eprop{\end{proposition}}
\def\bcor{\begin{corollary}}
\def\ecor{\end{corollary}}
\def\A{\mathbb{A}}
\nc{\Hom}{{\rm{Hom}}}
\nc{\Ext}{{\rm{Ext}}}
\nc{\HOM}{\underline{\rm{Hom}}}
\nc{\EXT}{\underline{\rm{Ext}}}
\nc{\TOR}{\underline{\rm{Tor}}}
\nc{\End}{{\rm{End}}}
\nc{\GL}{{\rm{GL}}}
\nc{\PGL}{{\rm{PGL}}}
\nc{\SL}{{\rm{SL}}}
\nc{\PSL}{{\rm{PSL}}}
\nc{\Rep}{{\rm{Rep}}}
\nc{\ad}{{\rm{ad}}}
\nc{\dlim}{\varinjlim}
\newcommand{\compl}{\mathbb C}
\newcommand{\rar}{\rightarrow}
\newcommand{\hb}{\hbar}
\begin{document}

\title{A variant of the Mukai pairing via deformation quantization.}

\author{Ajay C. Ramadoss}
\address{Departement Mathematik,
 ETH Z\"{u}rich, R\"{a}mistrasse 101, 8092 Z\"{u}rich}
\email{ajay.ramadoss@math.ethz.ch}
\maketitle

\begin{abstract}
We give a new method to prove a formula computing a variant of Caldararu's Mukai pairing \cite{Cal1}. Our method is based on some important results in the area of deformation quantization. In particular, part of the work of Kashiwara and Schapira in \cite{KS} as well as an algebraic index theorem of Bressler, Nest and Tsygan in \cite{BNT},\cite{BNT1} and \cite{BNT2} are used. It is hoped that our method is useful for generalization to settings involving certain singular varieties.
\end{abstract}

\section{Introduction.}
Let $X$ denote a smooth proper complex variety (we remind the reader that $X$ has the Zariski topology). We denote the corresponding (compact) complex manifold by $X^{an}$. In \cite{Cal1}, A. Caldararu defined a Mukai pairing $\langle -,- \rangle_M$ on $\text{HH}_{\bullet}(X)$, the Hochschild homology of $X$. On the other hand, one has the Hochschild-Kostant-Rosenberg (HKR) isomorphism $I_{HKR}:\text{HH}_{\bullet}(X) \rar \oplus_i \text{H}^{i-\bullet}(X,\Omega^i_X)$. It was implicitly proven in \cite{Mar1} (and explicitly so in \cite{Ram1} following \cite{Mar1}) that
$$\langle a,b \rangle_M = \int_X I_{HKR}(b) \wedge J(I_{HKR}(a)) \wedge \text{td}(T_X)$$
where $J$ is the involution multiplying an element of $\text{H}^{i-\bullet}(X,\Omega^i_X)$ by ${(-1)}^i$. This result has recently been of interest: applications of this and related results appear, for instance, in \cite{HMS},\cite{MaS} and \cite{Ram4}. A closely related pairing was $\langle -,-\rangle_{Shk}$ was constructed in \cite{Ram3} following D. Shkyarov in \cite{Shkl}. It turns out that the latter pairing is directly related to a natural definition of Fourier-Mukai transforms in Hochschild homology (see \cite{Shkl} and \cite{Ram3}). This definition of Fourier-Mukai transforms in Hochschild homology is equivalent to an earlier, but less direct definition in \cite{Cal1} (also see Section 4.3 of\cite{KS}). A careful comparison between this pairing and Caldararu' Mukai pairing was performed in \cite{Ram3} to show that \begin{theorem} \la{main} \begin{equation} \la{pair} \langle a,b \rangle_{Shk} = \int_X I_{HKR}(a) \wedge I_{HKR}(b) \wedge \text{td}(T_X) \text{.} \end{equation} \end{theorem}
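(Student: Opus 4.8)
Rather than deducing \eqref{pair} from the known formula for Caldararu's pairing $\langle-,-\rangle_M$ (as was done in \cite{Ram3}), the plan is to compute $\langle a,b\rangle_{Shk}$ directly: I would realize it as a composition of Hochschild homology classes in the Kashiwara--Schapira formalism for DQ-modules, and then evaluate that composition using the algebraic index theorem of Bressler--Nest--Tsygan.

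First I would pass to the analytic category. Since $X$ is smooth and proper, GAGA furnishes isomorphisms $\text{HH}_\bullet(X)\cong\text{HH}_\bullet(X^{an})$ and $\oplus_i\text{H}^{i-\bullet}(X,\Omega^i_X)\cong\oplus_i\text{H}^{i-\bullet}(X^{an},\Omega^i_{X^{an}})$ compatible with $I_{HKR}$, with the trace maps $\int_X$, and with the Chern and Todd classes; moreover the kernels defining $\langle-,-\rangle_{Shk}$ --- in particular the structure sheaf $\mathcal{O}_\Delta$ of the diagonal of $X\times X$ --- and the Fourier--Mukai functors built from them analytify. So it suffices to prove \eqref{pair} on $X^{an}$. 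Next I would equip $X^{an}$ with a DQ-algebroid $\mathcal{A}$ quantizing the zero Poisson structure (for instance $\mathcal{O}_{X^{an}}[[\hbar]]$, possibly twisted by $\Omega^{1/2}_{X^{an}}$ to match the normalizations of \cite{KS}), so that $\text{HH}(\mathcal{A})$ recovers $\text{HH}_\bullet(X^{an})$ after extension of scalars to $\mathbb{C}((\hbar))$. Compactness of $X^{an}$ forces every coherent $\mathcal{A}$-module to have proper support, so the full Kashiwara--Schapira machinery is available: the Hochschild class $\mathrm{hh}(\mathcal{M})$ of a coherent module, the convolution of Hochschild classes along composition of kernels, an integration $\int_{X^{an}}\colon \text{HH}(\mathcal{A}_{X^{an}\times X^{an}})\to\mathbb{C}((\hbar))$, and the Riemann--Roch theorem $\chi(\mathrm{RHom}_{\mathcal{A}}(\mathcal{M}_1,\mathcal{M}_2))=\int_{X^{an}}\mathrm{hh}(\mathcal{M}_1)\cup\mathrm{hh}(\mathcal{M}_2)$.

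The core of the argument is then twofold. On one side I would identify Shklyarov's pairing \cite{Shkl} with the Kashiwara--Schapira pairing: $\langle a,b\rangle_{Shk}$ is obtained by pushing the external product $a\boxtimes b\in\text{HH}_\bullet(X^{an}\times X^{an})$ to $\text{HH}_\bullet(\mathrm{pt})=\mathbb{C}$ along the map induced by the kernel $\mathcal{O}_\Delta$ --- equivalently, by restricting to the diagonal and integrating --- and one checks this agrees with the corresponding Kashiwara--Schapira composition of Hochschild classes, using the functoriality of Hochschild homology under Fourier--Mukai kernels (cf.\ Section~4.3 of \cite{KS}) together with the compatibility of the two integration maps. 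On the other side I would evaluate the composition: Kashiwara--Schapira's HKR-type description of $\text{HH}(\mathcal{A}_{X^{an}})$ combined with the algebraic index theorem of \cite{BNT}, \cite{BNT1}, \cite{BNT2} --- which pins down the canonical trace density --- shows that the Hochschild-homology coordinates natural on the DQ side differ from the naive HKR coordinates by the Mukai twist $\text{td}^{1/2}$, while the Hochschild class of the diagonal module carries only the class of the diagonal. Restricting the external product along the diagonal turns $\boxtimes$ into $\wedge$ and, since $T_{X^{an}\times X^{an}}$ pulls back to $T_{X^{an}}^{\oplus 2}$ there, converts the two $\text{td}^{1/2}$-factors into a single $\text{td}(T_{X^{an}})$; hence $\langle a,b\rangle_{Shk}=\int_{X^{an}}I_{HKR}(a)\wedge I_{HKR}(b)\wedge\text{td}(T_{X^{an}})$, which GAGA carries back to $X$.

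I expect the principal obstacle to be the bookkeeping of normalizations and signs in that last step. The Kashiwara--Schapira HKR isomorphism for a DQ-algebroid is canonical only up to a square root of the Todd class, and the Bressler--Nest--Tsygan output must be combined with it carefully --- in particular one must see precisely why the full Todd genus $\text{td}(T_{X^{an}})$ (rather than its square root, or an $\hat{A}$-type class) is what survives the restriction to the diagonal, and --- just as crucially --- why the signs come out so that neither $I_{HKR}(a)$ nor $I_{HKR}(b)$ is acted on by the involution $J$, in contrast with the formula for $\langle-,-\rangle_M$. Two subsidiary obstacles are to verify that Shklyarov's trace-theoretic definition of the pairing coincides literally with the kernel-composition pairing, and to specialize the Bressler--Nest--Tsygan index theorem --- stated for genuine symplectic deformations equipped with their canonical trace --- to the degenerate, zero-Poisson situation used here, i.e.\ to extract from it the relevant \emph{commutative} trace-density computation.
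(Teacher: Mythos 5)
Your first reduction is essentially the paper's: identifying $\langle-,-\rangle_{Shk}$ with the Kashiwara--Schapira pairing (via functoriality of Hochschild classes under kernels and the computation of $\mathrm{Ch}(\mathcal{O}_\Delta)$ in dual bases) is exactly how the paper reduces the theorem to showing that the map $\tau$ in the Kashiwara--Schapira square equals $-\wedge\text{Td}(T_X)$, equivalently that $\text{eu}(\mathcal{O}_X)=\text{Td}(T_X)$. The genuine gap is in your second half, and you have in fact flagged it yourself without resolving it: you propose to quantize $X^{an}$ with the \emph{zero} Poisson structure and then invoke Bressler--Nest--Tsygan to ``pin down the canonical trace density'' and produce the $\text{td}^{1/2}$-twist. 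But the BNT algebraic index theorem is a statement about \emph{symplectic} deformation quantizations equipped with their canonical trace; for the trivial deformation $\mathcal{O}_{X^{an}}[[\hbar]]$ there is no nontrivial trace density for the theorem to compute, so it gives you nothing, and the ``degenerate, zero-Poisson specialization'' you list as a subsidiary obstacle is actually the whole problem. Likewise the claim that restricting $a\boxtimes b$ to the diagonal converts two $\text{td}^{1/2}$-factors into one $\text{td}$ is the statement $\text{eu}(\mathcal{O}_X)=\text{Td}(T_X)$ in disguise; it is the thing to be proved, not a bookkeeping consequence of the formalism.

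The paper's way around this --- the idea missing from your proposal --- is to change coefficients rather than to deform $X$: one maps $\mathcal{O}_X$ into the sheaf of differential operators $\mathcal{D}_X$, embeds $\pi^{-1}\mathcal{D}_{X^{an}}$ into the genuinely symplectic deformation quantization $\mathcal{A}^{\hbar}_{T^*X^{an}}[\hbar^{-1}]$ of the cotangent bundle, and applies BNT there, where it has content; the Todd class enters through the Feigin--Felder--Shoikhet/Engeli--Felder trace density on $\text{HH}_0(\mathcal{D}_{X^{an}})$. The remaining (and nontrivial) step is to match the Kashiwara--Schapira map $\text{D}$ against this trace density, which the paper does by comparing both with the pushforward to a point on the Hochschild homology of the DG-categories $\text{Perf}(\mathcal{O}_X)$ and $\text{Perf}(\mathcal{D}_X)$ (using T\"oen's and Keller's functoriality), together with the identification $\text{HH}_{\bullet}(\text{Perf}(\mathcal{D}_X))\cong\text{HH}_{\bullet}(\mathcal{D}_{X^{an}})$ proved by a Mayer--Vietoris reduction to the affine case. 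None of this machinery appears in your outline, and without it the index-theoretic input never connects to the pairing you are computing.
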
 In these notes, we provide a different proof of this result based on the work of Kashiwara-Schapira \cite{KS} and an algebraic index theorem of Bressler, Nest and Tsygan in \cite{BNT},\cite{BNT1} and \cite{BNT2} (the latter being a very important result in the general area of deformation quantization). Unlike the earlier approach from  \cite{Mar1}, \cite{Ram1} and \cite{Ram3} (also see \cite{Ram5} for further details), this approach requires that we work over $\mathbb C$. However, it gives a clear connection (hitherto missing) between the computation of a ``Mukai pairing'' and a large body of work in deformation quantization, algebraic index theorems and related topics. We also point out that essentially the same result has been proven in \cite{Griv} using what we use from \cite{KS} and a deformation to the normal cone argument. While the (interesting) approach in \cite{Griv} is far more concise than the one via \cite{Mar1}, \cite{Ram1} and \cite{Ram3}, the argument there is geometric and not intrinsic to $X$. Readers with some background in deformation quantization and algebraic index theory would also find the approach in this note far more concise than the earlier one (that in \cite{Mar1}, \cite{Ram1} and \cite{Ram3}), while remaining algebraic and intrinsic to $X$ in nature. Further, unlike the earlier approach, this method is likely to lend itself to generalization to more general settings involving certain singular varieties. We also remark that as far as (the above cited as well as other) recent applications are concerned, a formula for $\langle -,-\rangle_{Shk}$ is as useful/suitable as one for  $\langle -,- \rangle_M$.

\textbf{Acknowledgements.} I am very grateful to Damien Calaque, Giovanni Felder, Pierre Schapira and Xiang Tang for some very useful discussions. In fact, this note is motivated by a joint project with Xiang Tang. This work is partially supported by the Swiss National Science Foundation under the project ``Topological quantum mechanics and index theorems" (Ambizione Beitrag Nr.$\text{ PZ}00\text{P}2\_127427/1$).

\section{Preliminaries.}

Let $\omega_X:=\Omega^n_X[n]$. Let $\Delta: X \rar X \times X$ denote the diagonal embedding. Recall from \cite{KS} that one has the following commutative diagram in the bounded derived category $\text{D}^{b}(\mathcal O_X)$ of coherent sheaves on $X$.
$$\begin{CD}
\Delta^*\Delta_*\mathcal O_X @>>\text{td}> \Delta^!\Delta_*\omega_X\\
@VV{I_{HKR}}V                      @AA{\widehat{I_{HKR}}}A\\
\oplus_i \Omega^i_X[i] @>>{\tau}> \oplus_i \Omega^i_X[i]
\end{CD}$$
Let $\text{D}$ denote the map on hypercohomology induced by $\text{td}:\Delta^*\Delta_*\mathcal O_X \rar \Delta^!\Delta_*\omega_X$. Let $I_{HKR}$,$\widehat{I_{HKR}}$ and $\tau$ continue to denote the maps induced on hypercohomology by  $I_{HKR}$,$\widehat{I_{HKR}}$ and $\tau$ respectively. Applying hypercohomologies, one obtains the following commutative diagram.
\begin{equation} \label{CD1}
\begin{CD}
\mathbb{H}^{-\bullet}(X,\Delta^*\Delta_*\mathcal O_X) @>>\text{D}> \mathbb{H}^{-\bullet}(X,\Delta^!\Delta_*\omega_X)\\
@VV{I_{HKR}}V                      @AA{\widehat{I_{HKR}}}A\\
\oplus_i \text{H}^{i-\bullet}(X,\Omega^i_X)  @>>{\tau}>  \oplus_i \text{H}^{i-\bullet}(X,\Omega^i_X)
\end{CD}
\end{equation}
The map $\widehat{I_{HKR}}$ has been constructed in \cite{KS}, Section 5.4\footnote{A similar map has been constructed in Section 1 of \cite{Ram1}.}. M. Kashiwara and P.Schapira show us in \cite{KS}\footnote{We remark that all constructions/results in Chapter 5 of \cite{KS}, which are done in the setting of complex manifolds, work in the algebraic setting that we are working in.} that

 \begin{prop} Theorem \ref{main} is equivalent to the assertion that the map $\tau$ in \eqref{CD1} is the wedge product with $\text{Td}(TX)$.
  \end{prop}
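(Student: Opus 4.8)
The plan is to translate Theorem \ref{main} into an equality of $\mathbb C$-valued pairings and then to invoke the non-degeneracy of Serre duality. The first ingredient I would extract from \cite{KS} (Section 4.3, where the agreement with Shklyarov's definition is verified) and \cite{Ram3} is the description of $\langle-,-\rangle_{Shk}$ inside the Kashiwara--Schapira formalism: $\text{HH}_\bullet(X)$ is identified with $\mathbb{H}^{-\bullet}(X,\Delta^*\Delta_*\mathcal O_X)$, there is a canonical non-degenerate pairing
\[
\langle-,-\rangle_{\mathrm{can}}\colon\ \mathbb{H}^{-\bullet}(X,\Delta^*\Delta_*\mathcal O_X)\ \times\ \mathbb{H}^{-\bullet}(X,\Delta^!\Delta_*\omega_X)\ \too\ \mathbb C
\]
obtained by composition of kernels followed by the trace $\mathbb{H}^0(X,\omega_X)\to\mathbb C$, and one has $\langle a,b\rangle_{Shk}=\langle a,\text{D}(b)\rangle_{\mathrm{can}}$, where $\text{D}$ is the map of \eqref{CD1} induced by $\text{td}\colon\Delta^*\Delta_*\mathcal O_X\to\Delta^!\Delta_*\omega_X$. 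I would state this carefully, keeping track of which slot carries the twist and of the normalisation $\omega_X=\Omega^n_X[n]$.

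The second ingredient, also from \cite{KS} (Section 5.4, where $\widehat{I_{HKR}}$ is constructed), is that $I_{HKR}$ and $\widehat{I_{HKR}}$ are normalised so that $\langle-,-\rangle_{\mathrm{can}}$ is transported, with no correction term, to the Mukai-type pairing on Hodge cohomology:
\[
\langle x,\widehat{I_{HKR}}(\gamma)\rangle_{\mathrm{can}}=\int_X I_{HKR}(x)\wedge\gamma\qquad\text{for all }x\in\mathbb{H}^{-\bullet}(X,\Delta^*\Delta_*\mathcal O_X),\ \gamma\in\textstyle\bigoplus_i\text{H}^{i-\bullet}(X,\Omega^i_X).
\]
Since the square \eqref{CD1} commutes we have $\text{D}=\widehat{I_{HKR}}\circ\tau\circ I_{HKR}$, and feeding this into the two displays gives
\[
\langle a,b\rangle_{Shk}=\big\langle a,\widehat{I_{HKR}}\big(\tau(I_{HKR}(b))\big)\big\rangle_{\mathrm{can}}=\int_X I_{HKR}(a)\wedge\tau\big(I_{HKR}(b)\big)
\]
for all $a,b\in\text{HH}_\bullet(X)$.

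It remains to compare this with the formula asserted in Theorem \ref{main}. The pairing $(\alpha,\beta)\mapsto\int_X\alpha\wedge\beta$ on $\bigoplus_{p,q}\text{H}^q(X,\Omega^p_X)$ is perfect (Serre duality together with the Hodge decomposition), and $I_{HKR}$ is an isomorphism; hence
\[
\int_X I_{HKR}(a)\wedge\tau\big(I_{HKR}(b)\big)=\int_X I_{HKR}(a)\wedge I_{HKR}(b)\wedge\text{td}(T_X)
\]
holds for all $a,b$ if and only if $\tau(\gamma)=\gamma\wedge\text{td}(T_X)$ for every $\gamma\in\bigoplus_i\text{H}^{i-\bullet}(X,\Omega^i_X)$, i.e. if and only if $\tau$ is the wedge product with $\text{Td}(TX)$ (the two notations for the Todd class denoting the same element of $\bigoplus_i\text{H}^i(X,\Omega^i_X)$). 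This is precisely the asserted equivalence. The main obstacle is not this diagram chase but the first step: checking that the pairing built into the definition of $\langle-,-\rangle_{Shk}$ really is $\langle-,-\rangle_{\mathrm{can}}$ precomposed with $\text{D}$ — that the Todd twist occurs exactly once and on the correct slot, and that $\widehat{I_{HKR}}$ is normalised in \cite{KS} so that no extra scalar or characteristic class enters the transported pairing; tracking these conventions through \cite{KS} and \cite{Ram3} is the delicate part.
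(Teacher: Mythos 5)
Your overall architecture matches the paper's: both arguments funnel into the identity $\langle a,b\rangle_{Shk}=\int_X I_{HKR}(a)\wedge\tau(I_{HKR}(b))$ and then conclude by non-degeneracy of $(\alpha,\beta)\mapsto\int_X\alpha\wedge\beta$ (a step the paper leaves implicit, so your spelling it out via Serre duality is a small improvement). The second ingredient you invoke --- that the Kashiwara--Schapira pairing transported through $\widehat{I_{HKR}}$ becomes $\int_X I_{HKR}(-)\wedge(-)$ with no correction term --- is exactly what the paper cites from the end of Section 5.4 of \cite{KS}, and combining it with $\text{D}=\widehat{I_{HKR}}\circ\tau\circ I_{HKR}$ from \eqref{CD1} is the same diagram chase the paper performs.

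The genuine gap is the step you yourself flag as ``the delicate part'': the identification $\langle a,b\rangle_{Shk}=\langle a,\text{D}(b)\rangle_{\mathrm{can}}$, i.e.\ the equality of Shklyarov's pairing with the Kashiwara--Schapira pairing $\langle-,-\rangle_{KS}$. You propose to establish it by unwinding both definitions and tracking normalisations through \cite{KS} and \cite{Ram3}, but you give no argument, and this identification is the entire substance of the paper's proof of the proposition. The paper avoids the convention-chasing altogether with an indirect argument: Lemma 4.3.4 of \cite{KS} says that for a kernel $\Phi$ the induced map on Hochschild homology is $\alpha\mapsto\langle\text{Ch}(\Phi),\alpha\rangle_{KS}$; taking $\Phi=\mathcal O_{\Delta}$, whose transform is the identity, and invoking Theorem 5 of \cite{Ram3}, which writes $\text{Ch}(\mathcal O_{\Delta})=\sum_i e_i\otimes f_i$ for homogeneous bases dual under $\langle-,-\rangle_{Shk}$, one reads off $\langle f_j,e_i\rangle_{KS}=\delta_{ij}=\langle f_j,e_i\rangle_{Shk}$ and hence the equality of the two pairings. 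If you want to complete your version, you should either adopt this diagonal-kernel argument or actually carry out the direct comparison; the latter is precisely where sign, shift and twist errors creep in (which slot carries $\omega_X$, whether the identification $\text{HH}_{\bullet}(X\times Y)\cong\text{HH}_{\bullet}(Y)\otimes\text{HH}_{\bullet}(X)$ uses the duality map of Section 4.1 of \cite{KS}, etc.), and the paper explicitly takes care to sidestep that duality map.
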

  \begin{proof}
  Let $X,Y$ be smooth projective varieties over $\compl$. Recall that any $\Phi \in \text{D}^{b}_{coh}(X \times Y)$ gives an integral transform $\Phi_*^{cal}:\text{HH}_{\bullet}(X) \rar \text{HH}_{\bullet}(Y)$ (see Section 4.3 of \cite{Cal1}). On hypercohomologies, Corollary 4.2.2 of \cite{KS} yields a pairing
  $$\langle -,- \rangle_{KS}: \text{HH}_{\bullet}(X) \otimes \text{HH}_{\bullet}(X) \rar \compl \text{.}$$
We remark that $\text{HH}_{\bullet}(X)$ is also the hypercohomology of the complex of Hochschild chains of $\mathcal O_X^{op}$, which is equal to $\text{HH}_{\bullet}(X)$ since $\mathcal O_X^{op}=\mathcal O_X$. In particular, we are not making this identification via the duality map described at the end of Section 4.1 of \cite{KS}. Lemma 4.3.4 of \cite{KS} then tells us that after identifying $HH_{\bullet}(X \times Y)$ with $HH_{\bullet}(Y) \otimes HH_{\bullet}(X)$,\footnote{$X \times Y$ is viewed as $Y \times X$ while making this identification.}
  \begin{equation} \la{KSpair} \Phi_*^{cal}(\alpha)=\langle \text{Ch}(\Phi),\alpha \rangle_{KS} \text{ . }\end{equation}
   Let $\Phi =\mathcal O_{\Delta}$ ($\Delta$ here denoting the diagonal in $X \times X$). In this case, $\Phi_*^{cal}=\text{id}$ (see Section 5 of \cite{Cal1}). Then, by Theorem 5 of \cite{Ram3}\footnote{Note that we are not using any part of \cite{Ram3} that depends on the Mukai pairing formula computed in \cite{Mar1} and \cite{Ram1}.}, $\text{Ch}(\Phi)=\sum_i e_i \otimes f_i$ where the $e_i$ and $f_j$ are homogenous bases of $\text{HH}_{\bullet}(X)$ such that $\langle f_j,e_i \rangle_{Shk} =\delta_{ij}$. On the other hand, equation \eqref{KSpair} applied to $\alpha=e_i$ tells us that $\langle f_j, e_i\rangle_{KS} =\delta_{ij}$, thus showing that
   $\langle -,- \rangle_{KS} = \langle -,- \rangle_{Shk}$. Finally, the end of Section 5.4 of \cite{KS} shows us that
   $$\langle a,b \rangle_{KS}= \int_X I_{HKR}(a) \wedge \tau(I_{HKR}(b)) \text{.}$$
   \end{proof}
  We therefore, need to show that $\tau=(-\wedge \text{Td}(TX))$. In our method, the following proposition from \cite{KS}, Chapter 5 is the first step in this direction.
\begin{prop} \la{one}
(i) $\Delta^*\Delta_*\mathcal O_X$ is a ring object in $\text{D}^{b}(\mathcal O_X)$, and $ \Delta^!\Delta_*\omega_X$ is a left module object over $\Delta^*\Delta_*\mathcal O_X$ in $\text{D}^{b}(\mathcal O_X)$.\\
(ii) Further, $\text{td}$ is a morphism of left $\Delta^*\Delta_*\mathcal O_X$ modules in $\text{D}^{b}(\mathcal O_X)$.
.\end{prop}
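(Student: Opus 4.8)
The plan is to obtain the ring structure on $\Delta^*\Delta_*\mathcal O_X$ and the module structure on $\Delta^!\Delta_*\omega_X$ by transporting, through the functors $\Delta^*$ and $\Delta^!$, data that already lives on $X\times X$, where $\Delta_*\mathcal O_X$ is an honest sheaf of commutative rings. For part (i): the derived pushforward $\Delta_*$ is lax symmetric monoidal (its structure map $\Delta_*\mathcal F\otimes^L\Delta_*\mathcal G\to\Delta_*(\mathcal F\otimes^L\mathcal G)$ is the one adjoint to $\Delta^*\Delta_*\mathcal F\otimes^L\Delta^*\Delta_*\mathcal G\to\mathcal F\otimes^L\mathcal G$), so it carries the commutative algebra object $\mathcal O_X$ of $\text{D}^{b}(\mathcal O_X)$ to a commutative algebra object $\Delta_*\mathcal O_X$ of $(\text{D}^{b}(\mathcal O_{X\times X}),\otimes^L_{\mathcal O_{X\times X}})$, and the $\mathcal O_X$-module $\omega_X$ to a module over $\Delta_*\mathcal O_X$. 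Since $\Delta^*$ is strong symmetric monoidal (it commutes with derived tensor product), applying it produces the ring object $\Delta^*\Delta_*\mathcal O_X$ and the $\Delta^*\Delta_*\mathcal O_X$-module $\Delta^*\Delta_*\omega_X$.

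The one extra ingredient for the module assertion is the comparison of $\Delta^!$ with $\Delta^*$. Because $X$ is smooth, $\Delta$ is a regular closed immersion of codimension $n:=\dim X$, so there is a natural isomorphism $\Delta^!(-)\cong\Delta^*(-)\otimes_{\mathcal O_X}\Delta^!\mathcal O_{X\times X}$ on $\text{D}^{b}_{coh}(\mathcal O_{X\times X})$, with $\Delta^!\mathcal O_{X\times X}\cong(\Omega^n_X)^{\vee}[-n]$ an invertible object of $\text{D}^{b}(\mathcal O_X)$. Tensoring the module $\Delta^*\Delta_*\omega_X$ over $\mathcal O_X$ with an invertible $\mathcal O_X$-object does not disturb the $\Delta^*\Delta_*\mathcal O_X$-action, so $\Delta^!\Delta_*\omega_X$ is a $\Delta^*\Delta_*\mathcal O_X$-module; since $\Delta^*\Delta_*\mathcal O_X$ is graded-commutative, ``left'' versus ``right'' is immaterial. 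Equivalently and more invariantly, Grothendieck--Serre duality gives $\Delta^!\Delta_*\omega_X\cong\mathbb D_X(\Delta^*\Delta_*\mathcal O_X)$ (from $\Delta^!\circ\mathbb D_{X\times X}\cong\mathbb D_X\circ\Delta^*$ and $\mathbb D_{X\times X}\Delta_*\mathcal O_X\cong\Delta_*\omega_X$), and the dual $\mathbb D_X(M):=R\mathcal Hom_{\mathcal O_X}(M,\omega_X)$ of a module $M$ over a commutative ring object is again such a module. One should check that these coincide with the structures used in \cite{KS}; this is routine, since in each case there is only one natural candidate.

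For part (ii) I would first unwind the construction of $\text{td}$ from \cite{KS}. The key point is that $\text{td}$ is itself built from the same functors $\Delta^*,\Delta^!$ and duality isomorphisms used in part (i), applied to a morphism that on $X\times X$ is a morphism of $\Delta_*\mathcal O_X$-modules; concretely, under the identifications above $\text{td}\colon\Delta^*\Delta_*\mathcal O_X\to\mathbb D_X(\Delta^*\Delta_*\mathcal O_X)$ corresponds, via the adjunction $\Hom(A,\mathbb D_XA)\cong\Hom(A\otimes^L A,\omega_X)$, to the composite of the multiplication on $A:=\Delta^*\Delta_*\mathcal O_X$ with the canonical trace (integration) morphism $A\to\omega_X$. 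Granting this, the module-morphism property is a formal consequence of the associativity of the multiplication established in part (i): for a graded-commutative ring object $A$ with multiplication $m$ and trace $t\colon A\to\omega_X$, the morphism $A\to\mathbb D_XA$ adjoint to $t\circ m$ is a morphism of $A$-modules precisely because $t\circ m\circ(m\otimes\id)=t\circ m\circ(\id\otimes m)$.

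The hard part will be the bookkeeping in part (ii): faithfully transcribing the definition of $\text{td}$ in \cite{KS} into the ``multiply-then-trace'' form above --- equivalently, recognizing that the compatibility of $\text{td}$ with composition of kernels proved in \cite{KS}, specialized to the diagonal, is exactly the assertion that $\text{td}$ respects the module actions --- and keeping track of the canonical isomorphisms involved (the tensor--Hom adjunction, the Grothendieck-duality identifications, and the commutativity constraint). Should the abstract identification prove awkward, a hands-on alternative is to verify the module compatibility locally: on an affine open $U\subset X$ with \'etale coordinates, $\Delta^*\Delta_*\mathcal O_U$ is computed by the Koszul resolution of the diagonal and identified (via HKR) with the exterior algebra $\Lambda^\bullet_{\mathcal O_U}\Omega^1_U$, $\Delta^!\Delta_*\omega_U$ is its $\mathcal O_U$-linear dual, and one checks the compatibility of $\text{td}$ with the actions in these explicit terms together with compatibility on overlaps --- although the gluing again rests on naturality, so the structural argument is the cleaner route.
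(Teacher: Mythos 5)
Your part (i) is essentially the paper's argument in different clothing: the paper defines the ring structure as the composite $\Delta^*\Delta_*\mathcal O_X\otimes^{\mathbb L}_{\mathcal O_X}\Delta^*\Delta_*\mathcal O_X\cong\Delta^*(\Delta_*\mathcal O_X\otimes^{\mathbb L}_{\mathcal O_{X\times X}}\Delta_*\mathcal O_X)\to\Delta^*\Delta_*\mathcal O_X$, which is exactly your ``$\Delta_*$ is lax monoidal, $\Delta^*$ is strong monoidal'' package, and it defines the module structure on $\Delta^!\Delta_*\omega_X$ via the projection-formula isomorphism $\Delta^*(-)\otimes^{\mathbb L}_{\mathcal O_X}\Delta^!(-)\cong\Delta^!(-\otimes^{\mathbb L}_{\mathcal O_{X\times X}}-)$, which is equivalent to your twist by the invertible object $\Delta^!\mathcal O_{X\times X}$. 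That much is fine (modulo the comparison with the structures of \cite{KS}, which you rightly flag as routine).

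The gap is in part (ii), and it is a genuine one: your main line of argument is circular. You propose to identify $\text{td}$, under $\Delta^!\Delta_*\omega_X\cong\mathbb{D}_X(A)$ with $A:=\Delta^*\Delta_*\mathcal O_X$ and the adjunction $\Hom(A,\mathbb{D}_XA)\cong\Hom(A\otimes^{\mathbb L}A,\omega_X)$, with the adjoint of $t\circ m$ for some trace $t\colon A\to\omega_X$, and then deduce the module property from associativity of $m$. But for a given $f\colon A\to\mathbb{D}_XA$, the associated pairing factors through $m$ (i.e.\ $f(b)(a)$ depends only on the product $ab$) \emph{precisely when} $f$ is a morphism of $A$-modules; so ``$\text{td}$ is adjoint to multiply-then-trace'' is not a fact you can grant and then exploit --- it is a restatement of what is to be proved, and you defer exactly this step as ``the hard part.'' The paper avoids the detour entirely by working with the explicit construction of $\text{td}$ as the chain $\Delta^*\Delta_*\mathcal O_X\cong\Delta^!(\mathcal O_X\boxtimes\omega_X)\otimes^{\mathbb L}_{\mathcal O_X}\Delta^*\Delta_*\mathcal O_X\cong\Delta^!((\mathcal O_X\boxtimes\omega_X)\otimes^{\mathbb L}_{\mathcal O_{X\times X}}\Delta_*\mathcal O_X)\cong\Delta^!\Delta_*\omega_X$: every arrow is a projection-formula or unit isomorphism through which the left $A$-action passes by naturality, so the compatibility reduces to associativity of $\otimes^{\mathbb L}_{\mathcal O_X}$. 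Your two fallback routes (the compatibility of $\text{td}$ with composition of kernels in \cite{KS} specialized to the diagonal, or the local Koszul-resolution check with gluing) are each viable, but as written they, and not the ``multiply-then-trace'' reformulation, would have to carry the entire weight of (ii); the proposal as it stands does not contain a proof of that part.
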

\begin{proof}
The ring structure of $\Delta^*\Delta_*\mathcal O_X$ in $\text{D}^{b}(\mathcal O_X)$ is given by the composite map
$$ \Delta^*\Delta_*\mathcal O_X \otimes^{\mathbb L}_{\mathcal O_X} \Delta^*\Delta_*\mathcal O_X \cong \Delta^*(\Delta_*\mathcal O_X \otimes^{\mathbb L}_{\mathcal O_{X \times X}} \Delta_*\mathcal O_X) \stackrel{\Delta^*\mu}{\longrightarrow} \Delta^*\Delta_*\mathcal O_X $$
where $\mu$ is induced by the product map $\Delta_* \mathcal O_X \otimes_{\mathcal O_{X \times X}} \Delta_*\mathcal O_X$.\\
The module structure of $\Delta^!\Delta_*\omega_X$ over $\Delta^*\Delta_*\mathcal O_X$ is realized via the composite map
$$\Delta^*\Delta_*\mathcal O_X \otimes^{\mathbb L}_{\mathcal O_X} \Delta^!\Delta_*\omega_X \cong \Delta^! (\Delta_*\mathcal O_X \otimes^{\mathbb L}_{\mathcal O_{X \times X}} \Delta_*\omega_X) \stackrel{\Delta^!\nu}{\longrightarrow} \Delta^!\Delta_*\omega_X \text{.}$$
Here, $\nu$ is the composite map
$$ \Delta_*\mathcal O_X \otimes^{\mathbb L}_{\mathcal O_{X \times X}} \Delta_*\omega_X \cong \Delta_* (\Delta^*\Delta_*\mathcal O_X \otimes_{\mathcal O_X} \omega_X) \rar \Delta_*\omega_X $$
the last arrow being induced by the adjunction $\Delta^*\Delta_*\mathcal O_X \rar \mathcal O_X$.\\
The morphism $\text{td}$ was constructed in \cite{KS} as follows.
$$\Delta^*\Delta_*\mathcal O_X \cong \mathcal O_X \otimes^{\mathbb L}_{\mathcal O_X} \Delta^*\Delta_*\mathcal O_X \cong \Delta^!(\mathcal O_X \boxtimes \omega_X) \otimes^{\mathbb L}_{\mathcal O_X} \Delta^*\Delta_*\mathcal O_X \cong \Delta^!((\mathcal O_X \boxtimes \omega_X) \otimes^{\mathbb L}_{\mathcal O_{X \times X}} \Delta_*\mathcal O_X)$$ $$ \Delta^!((\mathcal O_X \boxtimes \omega_X) \otimes^{\mathbb L}_{\mathcal O_{X \times X}} \Delta_*\mathcal O_X) \cong \Delta^!\Delta_*\omega_X $$
That $\text{td}$ is a morphism of left $\Delta^*\Delta_*\mathcal O_X$-modules is more or less a direct consequence of the fact that $\otimes^{\mathbb L}_{\mathcal O_X}$ is associative.
\end{proof}

\begin{cor}
For all $\alpha \in\oplus_i \text{H}^{i-\bullet}(X,\Omega^i_X)$, $\tau(\alpha)=\alpha \wedge \tau(1)$.
\end{cor}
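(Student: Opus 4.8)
The plan is to pass Proposition \ref{one}(ii) to hypercohomology and combine it with the multiplicativity of the HKR isomorphisms. Write $R := \mathbb{H}^{-\bullet}(X,\Delta^*\Delta_*\mathcal O_X)$ and $M := \mathbb{H}^{-\bullet}(X,\Delta^!\Delta_*\omega_X)$. Since $\mathbb{H}^{\bullet}(X,-)$ is lax symmetric monoidal for $\otimes^{\mathbb L}_{\mathcal O_X}$ (via the cup product), Proposition \ref{one}(i) endows $R$ with the structure of a graded associative unital ring and $M$ with that of a graded left $R$-module, while Proposition \ref{one}(ii) makes the map $\text{D}$ of \eqref{CD1} a morphism of graded left $R$-modules. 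Since $\text{D}$ is left $R$-linear, $\text{D}(x) = \text{D}(x\cdot 1_R) = x\cdot\text{D}(1_R)$ for every $x\in R$; in other words $\text{D}$ is multiplication by the element $\text{D}(1_R)\in M$ (which just reflects that $R$, viewed as a left module over itself, is free of rank one on its unit).

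The crux is that, under the HKR identifications, these structures are the familiar ones. By \cite{KS}, \S5.4 --- which ultimately rests on the fact that the Koszul resolution of $\mathcal O_\Delta$ is a differential graded algebra, so that the naive HKR quasi-isomorphism $\Delta^*\Delta_*\mathcal O_X \simeq \bigoplus_i \Omega^i_X[i]$ is multiplicative and carries $\Delta^!\Delta_*\omega_X$ to a module over it --- the map $I_{HKR}\colon R \to \bigoplus_i \text{H}^{i-\bullet}(X,\Omega^i_X)$ is an isomorphism of graded rings taking the product of Proposition \ref{one}(i) to the wedge product, so in particular $I_{HKR}(1_R)=1$; and $\widehat{I_{HKR}}\colon \bigoplus_i \text{H}^{i-\bullet}(X,\Omega^i_X) \to M$ is an isomorphism of graded modules over $I_{HKR}^{-1}$, the source being free of rank one on $1$, i.e. $\widehat{I_{HKR}}(\alpha\wedge\beta) = I_{HKR}^{-1}(\alpha)\cdot\widehat{I_{HKR}}(\beta)$. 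Some bookkeeping of signs and of the shift in $\omega_X=\Omega^n_X[n]$ is needed here, but it does not affect the structure of the argument, and this step is the only real obstacle --- everything else being formal.

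Granting the above, the corollary follows at once. Given $\alpha\in\bigoplus_i \text{H}^{i-\bullet}(X,\Omega^i_X)$, set $x := I_{HKR}^{-1}(\alpha)\in R$; commutativity of \eqref{CD1} gives $\tau(\alpha) = \widehat{I_{HKR}}^{-1}(\text{D}(x))$, and hence
$$\tau(\alpha) = \widehat{I_{HKR}}^{-1}\bigl(x\cdot\text{D}(1_R)\bigr) = I_{HKR}(x)\wedge\widehat{I_{HKR}}^{-1}\bigl(\text{D}(1_R)\bigr) = \alpha\wedge\widehat{I_{HKR}}^{-1}\bigl(\text{D}(1_R)\bigr),$$
using the previous two paragraphs and $I_{HKR}(x)=\alpha$. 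Finally, $1_R = I_{HKR}^{-1}(1)$, so $\widehat{I_{HKR}}^{-1}(\text{D}(1_R)) = \widehat{I_{HKR}}^{-1}\bigl(\text{D}(I_{HKR}^{-1}(1))\bigr) = \tau(1)$, whence $\tau(\alpha)=\alpha\wedge\tau(1)$. (Alternatively one can run the whole argument inside $\text{D}^{b}(\mathcal O_X)$: as $\text{td}$ is a left $\Delta^*\Delta_*\mathcal O_X$-module morphism out of the free rank-one module, it is multiplication by $\text{td}\circ\mathbf 1\in\Hom_{\text{D}^{b}(\mathcal O_X)}(\mathcal O_X,\Delta^!\Delta_*\omega_X)$, and one transports this through the HKR identifications before applying $\mathbb{H}^{-\bullet}(X,-)$.)
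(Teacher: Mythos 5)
Your proof is correct and follows essentially the same route as the paper: you deduce from Proposition \ref{one} that $\text{D}$ is left-multiplication by $\text{D}(1_R)$ on hypercohomology, and you invoke the compatibility $\widehat{I_{HKR}}(I_{HKR}(a)\wedge\beta)=a\bullet\widehat{I_{HKR}}(\beta)$ (which is exactly Lemma 5.4.7 of \cite{KS}, the ingredient the paper cites) to transport this through the HKR isomorphisms. The only difference is expository: you spell out the unit bookkeeping that the paper leaves implicit.
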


\begin{proof}
The ring structure of $\Delta^*\Delta_* \mathcal{O}_X$ induces a product $\bullet$ on $\mathbb{H}^{-\bullet}(X,\Delta^*\Delta_*\mathcal O_X)$. By Proposition \ref{one},
$$\text{D}(a \bullet b) =a \bullet \text{D}(b) $$
for all $a,b \in \mathbb{H}^{-\bullet}(X,\Delta^*\Delta_*\mathcal O_X)$. It follows from Lemma 5.4.7 of \cite{KS} that for all $a,b \in \mathbb{H}^{-\bullet}(X,\Delta^*\Delta_*\mathcal O_X)$,
$$ \widehat{I_{HKR}}(I_{HKR}(a) \wedge \beta)= a \bullet \widehat{I_{HKR}}(\beta) $$
The desired corollary now follows from the fact that $I_{HKR}$ and $\widehat{I_{HKR}}$ are isomorphisms.
\end{proof}

Recall that for any $E \in \text{D}^{b}(\mathcal O_X)$, one has the {\it Chern character} $\text{ch}(E) \in \mathbb{H}^{0}(X,\Delta^*\Delta_*\mathcal O_X)$. By Theorem 4.5 of \cite{Cal2}, $I_{HKR}(\text{ch}(E))$ is the Chern character of $E$ in the classical sense. The {\it Euler class} $\text{eu}(E)$ is defined as the element
 $\widehat{I_{HKR}}^{-1}(\text{D}(\text{ch}((E)))$ of $\oplus_i \text{H}^{i}(X,\Omega^i_X)$.
Note $\tau(1)=\text{eu}(\mathcal O_X)$. In order to compute the $\langle-,-\rangle_{Shk}$, we therefore, need to show that
$$ \text{eu}(\mathcal O_X)=\text{Td}(T_X) \text{.} $$

Before we proceed, let us make a clarification. Recall that $\Delta^*\Delta_*\mathcal O_X$ is represented in the derived category $\text{D}^{-}(\mathcal O_X)$ of bounded above complexes of quasi-coherent sheaves on $X$ by the complex of $\widehat{\mathcal{C}_{\bullet}}(\mathcal O_X)$ of completed Hochschild chains (after turning it into a cochain complex by inverting degrees). Recall from \cite{Y} that $\widehat{\mathcal{C}_{n}}(\mathcal O_X):= \varprojlim_k \frac{\mathcal O_X^{\otimes n+1}}{I_n^k}$ where $I_n$ is the kernel of the product map $\mathcal O_X^{\otimes n+1} \rar \mathcal O_X$. Let $\mathcal C_{\bullet}(\mathcal O_X)$ be the complex of sheaves of $X$ associated to the complex of presheaves $U \mapsto \text{C}_{\bullet}(\Gamma(U,\mathcal O_X))$ (the Hochschild chain complex here being the naive algebraic one). One similarly defines $\mathcal{C}_{\bullet}^{red}(\mathcal O_X)$ using reduced Hochschild chains. There are natural maps $\mathcal{C}_{\bullet}^{red}(\mathcal O_X) \leftarrow \mathcal C_{\bullet}(\mathcal O_X) \rar \widehat{\mathcal{C}_{\bullet}}(\mathcal O_X)$ of complexes of sheaves on $X$ which are quasiisomorphisms. In the following section, when thinking of the complex of Hochschild chains on $X$, we shall be thinking of $\mathcal C_{\bullet}^{red}(\mathcal O_X)$ (which has the same hypercohomology as $\widehat{\mathcal{C}_{\bullet}}(\mathcal O_X)$).

\section{The Euler class of $\mathcal O_X$.}

It remains to show that $\text{eu}(\mathcal O_X)=\text{Td}(T_X)$. The original intrinsic computation for this from \cite{Mar1} (see \cite{Ram1} for details) is very lengthy and involved. Further, its connections to deformation quantization and related areas are not clear. Another, more recent proof due to \cite{Griv} uses deformation to the normal cone. We now sketch our new approach to this question. Let $\mathcal D_X$ denote the sheaf of (algebraic) differential operators on $X$. Recall that the Hochschild-Kostant-Rosenberg quasiisomorphism on Hochschild chains induces an isomorphism $I_{HKR}:\text{HC}_0^{per}(\mathcal O_X) \rightarrow \prod_{p=-\infty}^{\infty}\text{H}^{2p}(X^{an},\compl)$. On the other hand, a construction very similar to the trace density construction of Engeli-Felder on Hochschild chains induces an isomorphism $\chi:\text{HC}_0^{per}(\mathcal D_{X^{an}}) \rightarrow \prod_{p=-\infty}^{\infty}\text{H}^{2n-2p}(X^{an},\compl)$ (see \cite{EnFe}, \cite{PPT} and \cite{Will}). Further, one has a natural map $(-)^{an}: \text{HC}_0^{per}(\mathcal D_X) \rar \text{HC}_0^{per}(\mathcal D_{X^{an}})$\footnote{Indeed, if $f:X^{an} \rar X$ is the canonical map, one has a natural map $f^{-1}(\mathcal{CC}_{\bullet}^{per}(\mathcal D_X)) \rar \mathcal{CC}_{\bullet}^{per}(\mathcal D_{X^{an}})$ of complexes of sheaves on $X^{an}$, and hence in the derived category $\text{D}(\text{Sh}_{\compl}(X^{an}))$ of sheaves of $\compl$-vector spaces on $X^{an}$. By adjunction, one gets a natural map $\mathcal{CC}_{\bullet}^{per}(\mathcal D_X) \rar Rf_*(\mathcal{CC}_{\bullet}^{per}(\mathcal D_{X^{an}}))$, to which we apply $R\Gamma(X,-)$. $Rf_*$ and $R\Gamma$ are extend to $\text{D}(\text{Sh}_{\compl}(X^{an}))$ and $\text{D}(\text{Sh}_{\compl}(X))$ respectively since $f_*$ and $\Gamma(X,-)$ have finite cohomological dimension.}. The natural homomorphism  $\mathcal O_X \rar \mathcal D_X$ of sheaves of algebras on $X$ induces maps on Hochschild as well as negative cyclic and periodic cyclic homologies. These maps shall be denoted by $\iota$.  The following proposition is closely related to a Theorem in \cite{BNT} (also see \cite{BNT1} and \cite{BNT2}).

\begin{prop} \la{two}
The following diagram commutes.
$$\begin{CD}
\text{HC}_0^{per}(\mathcal O_X) @>>(-)^{an} \circ \iota> \text{HC}_0^{per}(\mathcal D_{X^{an}})\\
@VV{I_{HKR}}V   @V{\chi}VV\\
\prod_{p=-\infty}^{\infty}\text{H}^{2p}(X^{an},\compl) @>(-\wedge \text{Td}(T_X))>> \prod_{p=-\infty}^{\infty}\text{H}^{2n-2p}(X^{an},\compl)
\end{CD}$$
\end{prop}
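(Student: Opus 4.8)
The plan is to read Proposition \ref{two} as a manifestation, at the level of periodic cyclic homology, of the algebraic index theorem of Bressler--Nest--Tsygan \cite{BNT}, \cite{BNT1}, \cite{BNT2}, applied to the sheaf $\mathcal D_{X^{an}}$ regarded as a deformation quantization of the symplectic manifold $T^*X^{an}$. First I would observe that, after the reindexing $p\mapsto n-p$, both $\prod_p \text{H}^{2p}(X^{an},\compl)$ and $\prod_p \text{H}^{2n-2p}(X^{an},\compl)$ are just $\bigoplus_m \text{H}^{2m}(X^{an},\compl)$; the two numberings only record that the grading on the $\mathcal D_{X^{an}}$--side is measured from the top degree. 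Thus there is no hidden fibre integration in the displayed square, and the assertion is that the map $\chi\circ(-)^{an}\circ\iota$, transported through $I_{HKR}$ and through $\chi$, is the honest cup product with $\text{Td}(T_X)$ on $\bigoplus_m \text{H}^{2m}(X^{an},\compl)$.

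The first real step is to pin down $\chi$. Following Engeli--Felder and \cite{PPT}, \cite{Will}, it is produced by Gelfand--Fuks descent: on the formal polydisc one has the algebra $D_n$ of formal differential operators together with a $\GL_n$--basic, $W_n$--equivariant trace density cocycle with values in formal differential forms; pairing this cocycle with the bundle of formal coordinate systems on $X^{an}$ and its canonical flat connection globalizes it to $\chi$, and this construction realizes the isomorphism $\text{HC}_0^{per}(\mathcal D_{X^{an}})\cong\prod_p\text{H}^{2n-2p}(X^{an},\compl)$. The point is that this is \emph{local} in $X$, as is the map $(-)^{an}\circ\iota$ (the latter being induced by $\mathcal O_X\hookrightarrow\mathcal D_X$ together with the comparison of algebraic and analytic Hochschild chains). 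Consequently $\chi\circ(-)^{an}\circ\iota$, like $I_{HKR}$ itself, is a characteristic--class map manufactured from the same formal--geometry data, and the commutativity of the square becomes a single universal identity in relative Lie algebra cohomology: the restriction along $(\mathfrak{gl}_n,\GL_n)\hookrightarrow (W_n,\GL_n)$ of the universal trace density of $D_n$ equals the universal Todd class.

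The second step is to extract this universal identity from the index theorem, which is essentially the content of \cite{BNT}, \cite{BNT1}, \cite{BNT2}. Passing to the Rees algebra $\widetilde{\mathcal D}_X:=\bigoplus_k F_k\mathcal D_X\cdot\hbar^k$ of the order filtration exhibits $\mathcal D_X$, at $\hbar=1$, as a specialization of a flat $\compl[\hbar]$--deformation of $\mathcal O_{T^*X}$, i.e. of a deformation quantization of $T^*X^{an}$. The BNT index theorem identifies the canonical trace density of such a quantization with cup product by $\widehat{A}(T_{T^*X})\cdot e^{\theta}$, $\theta$ being the Fedosov (Deligne) class of the deformation; one also needs the comparison, carried out in \cite{PPT}, \cite{Will}, between the Engeli--Felder density $\chi$ and the canonical trace of \cite{BNT}. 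For the tautological quantization of $T^*X$ by differential operators the Fedosov class restricts on the zero section to $\tfrac12 c_1(T_X)$, while $T_{T^*X}|_X\cong T_X\oplus T_X^*$ gives $\widehat{A}(T_{T^*X})|_X=\widehat{A}(T_X)^2$; since the trace density of $\mathcal D_X$ is concentrated near the zero section, the pushforward along $\pi:T^*X\to X$ converts the vertical factor $\widehat{A}(T_X^*)$ into a trivial one, leaving $e^{\frac12 c_1(T_X)}\widehat{A}(T_X)=\text{Td}(T_X)$. Combined with Theorem 4.5 of \cite{Cal2} (so that $I_{HKR}$ is the classical de Rham/HKR identification), this gives the proposition.

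The main obstacle is the bookkeeping hidden in the last step: checking that the Fedosov class of the differential--operator quantization of $T^*X$ is exactly $\tfrac{[\omega]}{\hbar}+\tfrac12\pi^*c_1(T_X)$, and that the combined effect of $\widehat{A}(T_{T^*X})$, the fibre pushforward and this twist is precisely $\text{Td}(T_X)$ rather than, say, $\widehat{A}(T_X)$ or $\widehat{A}(T_X)^2$. Concretely this amounts to fixing all normalization constants in the Engeli--Felder cocycle and in the trace of \cite{BNT} so that the two genuinely agree, and then performing the $\GL_n$--relative Lie algebra cohomology computation that evaluates the universal trace density. A secondary, more routine point is the passage between the formal $\hbar$--deformation framework of \cite{BNT} and the genuine sheaf $\mathcal D_X$ (the specialization $\hbar=1$ and the $(-)^{an}$ comparison of algebraic and analytic de Rham cohomology), which causes no trouble since all the sheaves involved have finite cohomological dimension.
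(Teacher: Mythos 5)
Your overall strategy --- reduce the statement to the Bressler--Nest--Tsygan algebraic index theorem for the canonical deformation quantization of $T^*X^{an}$, into which $\pi^{-1}\mathcal D_{X^{an}}$ embeds --- is exactly the strategy of the paper. The paper, however, quotes the index theorem in its cotangent-bundle form (Theorem \ref{t1}): the trace density $\chi_{FFS}$ on $\text{HC}_0^{per}(\A^{\hb}_{T^*Y})$ is cup product with $\pi^*\text{Td}(T_Y)$ composed with $I_{HKR}\circ\sigma$, with the Todd class of the \emph{base} already in place. The proof is then assembled from a handful of compatibility squares (Propositions \ref{six} and \ref{seven}) relating $\chi$, $\chi_{FFS}$, $\pi^{-1}$, $\pi^*$, $\sigma$, $\iota$ and $\beta$, and concludes with $i^*\circ\pi^*=\id$ on $\prod_p\text{H}^{2n-2p}(Y,\compl)((\hb))$; no evaluation of a Fedosov class or of $\widehat{A}(T_{T^*Y})$ is ever performed.

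The genuine gap in your version is precisely the step where you convert the general symplectic formula $\widehat{A}(T_{T^*X})\,e^{\theta}$ into $\text{Td}(T_X)$. You correctly observe at the outset that there is no hidden fibre integration in the displayed square --- the comparison of $\chi$ with $\chi_{FFS}$ goes through $\pi^*$ and the restriction $i^*$ to the zero section, and $i^*\pi^*=\id$ --- but your final computation then appeals to ``the pushforward along $\pi:T^*X\rar X$'' to dispose of the vertical factor $\widehat{A}(T_X^*)$. These two statements are incompatible. If one only restricts to the zero section, then $\widehat{A}(T_{T^*X})|_X\,e^{\frac{1}{2}c_1(T_X)}=\widehat{A}(T_X)^2e^{\frac{1}{2}c_1(T_X)}=\text{Td}(T_X)\widehat{A}(T_X)$, which differs from the required answer by a nontrivial factor of $\widehat{A}(T_X)$. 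The surplus factor is in fact absorbed in the normalization comparison between the Engeli--Felder density for $\mathcal D_{X^{an}}$ (valued in forms on $X^{an}$) and the Feigin--Felder--Shoikhet density for $\A^{\hb}_{T^*X^{an}}$ (valued in forms on $T^*X^{an}$), together with the sign of the Fedosov class $\frac{[\omega]}{\hb}\pm\frac{1}{2}\pi^*c_1(T_X)$; but this is exactly the computation you defer as ``the main obstacle,'' and without it the argument as written yields the wrong class. Either carry out that normalization in full, or do as the paper does and invoke the $T^*Y$-specific statement of \cite{BNT}, in which the class $\pi^*\text{Td}(T_Y)$ is part of the cited theorem. (Your reduction from the algebraic to the analytic setting via locality of $(-)^{an}\circ\iota$ is fine and matches the first square of the paper's Section 4.)
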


Note that for any sheaf of algebras $\mathcal A$ on $X$, one has natural maps $\text{HC}_0^{-}(\mathcal A) \rar  \text{HC}_0^{per}(\mathcal A)$ and $\text{HC}_0^{-}(\mathcal A) \rar \text{HH}_0(\mathcal A)$. Also recall that one has a natural projection $\text{H}^{2p}(X^{an},\compl) \rar \text{H}^{p,p}(X^{an},\compl)$ for all $p$. We omit the proof of the following proposition.

\begin{prop} \la{three}
The following diagrams commute.\\
(a)
$$\begin{CD}
\text{HC}_0^{-}(\mathcal O_X) @>{I_{HKR}}>> \prod_{p=-\infty}^{\infty}\text{H}^{2p}(X^{an},\compl)\\
@VVV        @VVV\\
\text{HH}_0(\mathcal O_X) @>{I_{HKR}}>> \oplus_p \text{H}^{p,p}(X^{an},\compl)
\end{CD}$$
(b)
$$\begin{CD}
\text{HC}_0^{-}(\mathcal D_{X^{an}}) @>\chi>> \prod_{p=-\infty}^{\infty}\text{H}^{2n-2p}(X^{an},\compl)\\
@VVV   @VVV\\
\text{HH}_0(\mathcal D_{X^{an}}) @>\chi>> \text{H}^{2n}(X^{an},\compl)
\end{CD}$$
(c)
$$\begin{CD}
\text{HC}_0^{-}(\mathcal O_X) @>(-)^{an} \circ \iota>> \text{HC}_0^{-}(\mathcal D_{X^{an}})\\
@VVV    @VVV\\
\text{HH}_0(\mathcal O_X) @>(-)^{an} \circ \iota>> \text{HH}_0(\mathcal D_{X^{an}})
\end{CD}$$
\end{prop}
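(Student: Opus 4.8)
The plan is to deduce all three commutativities from one principle. In each diagram the two horizontal maps are induced by a \emph{single} morphism of (completed) mixed complexes of sheaves — on $X$, on $X^{an}$, or relating the two — and each vertical map is the canonical map $I\colon \text{HC}_0^{-}\rar \text{HH}_0$ obtained from the projection $(\mathcal C_{\bullet}[[u]],b+uB)\rar (\mathcal C_{\bullet},b)$, $u\mapsto 0$; on the cohomological side the corresponding vertical maps are the projections of an infinite product of de Rham groups onto a single graded summand. Since $I$ is natural for morphisms of mixed complexes, the content of each diagram is only that the identifications $I_{HKR}$, $\chi$ (and the comparison $(-)^{an}\circ\iota$) are realized by chain-level maps commuting with Connes' operator $B$, and that under these identifications ``$u\mapsto 0$'' matches the stated projection of de Rham groups.

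Diagram (c) needs no such input; it is pure functoriality. By the footnote, $(-)^{an}\circ\iota$ is induced by the homomorphisms of sheaves of algebras $\mathcal O_X\rar \mathcal D_X$ and $f^{-1}\mathcal D_X\rar \mathcal D_{X^{an}}$. Any algebra homomorphism induces a morphism of the associated cyclic objects, hence of their mixed complexes — it commutes strictly with $b$ and with $B$ — and therefore of their negative cyclic and Hochschild complexes, intertwining $I$. Sheafifying and applying $Rf_{*}$ and $R\Gamma(X,-)$ preserves this, so (c) commutes.

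For diagram (a) I would model $I_{HKR}$ at the level of complexes of sheaves by the (normalized) Hochschild--Kostant--Rosenberg quasi-isomorphism between $\mathcal C_{\bullet}^{red}(\mathcal O_X)$ and $(\Omega_X^{\bullet},0)$. It is classical (cf.\ Loday's book \emph{Cyclic Homology}) that, suitably normalized, this is a morphism of mixed complexes taking $B$ to the de Rham differential $d$. On negative cyclic complexes it then becomes the evident quasi-isomorphism onto $(\Omega_X^{\bullet}[[u]],ud)$; taking hypercohomology over $X$ and invoking degeneration of the Hodge--de Rham spectral sequence for the smooth proper variety $X$ identifies $\text{HC}_0^{-}(\mathcal O_X)$ with $\prod_p \text{H}^{2p}(X^{an},\compl)$ and $\text{HH}_0(\mathcal O_X)$ with $\oplus_p \text{H}^{p,p}(X^{an},\compl)$, in a way that sends $I$ (``$u\mapsto 0$'') to projection onto the Hodge $(p,p)$-components. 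A diagram chase then gives (a).

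Diagram (b) goes through verbatim with $\chi$ in place of $I_{HKR}$ and $\mathcal D_{X^{an}}$ in place of $\mathcal O_X$: one uses that the Engeli--Felder-type trace density of \cite{EnFe},\cite{PPT},\cite{Will} is induced by a chain map compatible with the cyclic structure — precisely the property that lets a trace density descend to negative (indeed periodic) cyclic homology — so that $\chi$ intertwines $I\colon\text{HC}_0^{-}(\mathcal D_{X^{an}})\rar\text{HH}_0(\mathcal D_{X^{an}})$ with the projection $\prod_p \text{H}^{2n-2p}(X^{an},\compl)\rar\text{H}^{2n}(X^{an},\compl)$ onto the $p=0$ summand. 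The one genuinely non-formal ingredient — and the reason the detailed argument is worth omitting — is this cyclic compatibility of the trace density at the chain level; it is the exact $\mathcal D_{X^{an}}$-analogue of the mixed-complex HKR theorem used in (a) and is supplied by the cited references. Granting it, the remaining verification in each of (a), (b), (c) is a routine diagram chase.
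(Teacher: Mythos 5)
The paper explicitly omits the proof of this proposition, so there is no argument of the author's to compare against; your proposal supplies the natural one and is essentially correct. All three squares do reduce to naturality of the canonical map $\text{HC}_0^{-}\rar\text{HH}_0$ for morphisms of mixed complexes: (c) is purely formal, and (a), (b) need only that $I_{HKR}$ and the trace density $\chi$ are realized at the chain level by maps of mixed (equivalently, cyclic) complexes, which is supplied by the normalized HKR theorem and by \cite{EnFe}, \cite{PPT}, \cite{Will} respectively. The one step in (a) deserving to be spelled out is that the mixed-complex projection ``$u\mapsto 0$'' induces, under $I_{HKR}$, the map $F^p\text{H}^{2p}_{dR}(X)\cong\mathbb{H}^{2p}(X,\Omega_X^{\ge p})\rar\text{H}^p(X,\Omega^p_X)$, and that this agrees with the restriction of the Hodge $(p,p)$-projection appearing as the right vertical arrow; this is precisely where degeneration of the Hodge--de Rham spectral sequence and the identification $\text{gr}^p_F\text{H}^{2p}\cong\text{H}^{p,p}$ enter, and you correctly flag both, so the argument is complete at the level of rigor the paper itself adopts.
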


\begin{prop} \la{four}
The following diagram commutes.
$$\begin{CD}
\text{HH}_0(\mathcal O_X) @>(-)^{an} \circ \iota>> \text{HH}_0(\mathcal D_{X^{an}})\\
@VV{I_{HKR}}V  @V{\chi}VV\\
\oplus_p \text{H}^{p,p}(X^{an},\compl) @>>{(-\wedge \text{Td}(T_X))_{2n}}> \text{H}^{2n}(X^{an},\compl)
\end{CD}$$
\end{prop}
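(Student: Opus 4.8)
The plan is to deduce this statement formally from Proposition~\ref{two} (the periodic version) together with Proposition~\ref{three}, by lifting a Hochschild class to negative cyclic homology and chasing the resulting diagram. I will use freely that, for $\mathcal A = \mathcal O_X$ or $\mathcal A = \mathcal D_{X^{an}}$, the natural structure maps $\text{HC}_0^{-}(\mathcal A)\to\text{HC}_0^{per}(\mathcal A)$ and $\text{HC}_0^{-}(\mathcal A)\to\text{HH}_0(\mathcal A)$, together with $I_{HKR}$, the trace density map $\chi$ and $(-)^{an}\circ\iota$, are all induced by morphisms of complexes computing the relevant mixed complexes. In particular $(-)^{an}\circ\iota$ commutes both with $\text{HC}_0^{-}\to\text{HC}_0^{per}$ and with $\text{HC}_0^{-}\to\text{HH}_0$ (the latter being Proposition~\ref{three}(c)); the versions of $I_{HKR}$ on $\text{HC}_0^{-}(\mathcal O_X)$ and $\text{HC}_0^{per}(\mathcal O_X)$ agree upon composing with $\text{HC}_0^{-}(\mathcal O_X)\to\text{HC}_0^{per}(\mathcal O_X)$; and the versions of $\chi$ on $\text{HC}_0^{-}(\mathcal D_{X^{an}})$ and $\text{HC}_0^{per}(\mathcal D_{X^{an}})$ agree upon composing with $\text{HC}_0^{-}(\mathcal D_{X^{an}})\to\text{HC}_0^{per}(\mathcal D_{X^{an}})$.

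First I would fix $\eta\in\text{HH}_0(\mathcal O_X)$ and choose a lift $\tilde\eta\in\text{HC}_0^{-}(\mathcal O_X)$. Such a lift exists because the map $\text{HC}_0^{-}(\mathcal O_X)\to\text{HH}_0(\mathcal O_X)$ is surjective: for $X$ smooth and proper over $\compl$ the HKR map identifies $\text{HC}_0^{-}(\mathcal O_X)$ — which here coincides with $\text{HC}_0^{per}(\mathcal O_X)$ — with $\prod_p\text{H}^{2p}(X^{an},\compl)$, and by Proposition~\ref{three}(a) the map to $\text{HH}_0(\mathcal O_X)=\oplus_p\text{H}^{p,p}(X^{an},\compl)$ is then the split projection onto the Hodge summands. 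Put $\tilde\alpha:=I_{HKR}(\tilde\eta)\in\prod_p\text{H}^{2p}(X^{an},\compl)$; by Proposition~\ref{three}(a) the image of $\tilde\alpha$ under the Hodge projection is $I_{HKR}(\eta)$.

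Next I would move $\tilde\eta$ into $\text{HC}_0^{per}(\mathcal O_X)$ and apply Proposition~\ref{two}. If $\tilde\eta^{per}$ denotes the image of $\tilde\eta$, then $I_{HKR}(\tilde\eta^{per})=\tilde\alpha$ by the compatibility above, so Proposition~\ref{two} gives $\chi\bigl((-)^{an}\circ\iota(\tilde\eta^{per})\bigr)=\tilde\alpha\wedge\text{Td}(T_X)$ in $\prod_p\text{H}^{2n-2p}(X^{an},\compl)$. Since $(-)^{an}\circ\iota$ commutes with $\text{HC}_0^{-}\to\text{HC}_0^{per}$, the element $(-)^{an}\circ\iota(\tilde\eta^{per})$ is the image in $\text{HC}_0^{per}(\mathcal D_{X^{an}})$ of $(-)^{an}\circ\iota(\tilde\eta)\in\text{HC}_0^{-}(\mathcal D_{X^{an}})$; invoking the compatibility of the two versions of $\chi$ yields $\chi\bigl((-)^{an}\circ\iota(\tilde\eta)\bigr)=\tilde\alpha\wedge\text{Td}(T_X)$, now regarded inside $\prod_p\text{H}^{2n-2p}(X^{an},\compl)$. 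On the other hand, by Proposition~\ref{three}(c) the class $(-)^{an}\circ\iota(\eta)\in\text{HH}_0(\mathcal D_{X^{an}})$ is the image of $(-)^{an}\circ\iota(\tilde\eta)$, so Proposition~\ref{three}(b) shows that $\chi\bigl((-)^{an}\circ\iota(\eta)\bigr)$ equals the image of $\tilde\alpha\wedge\text{Td}(T_X)$ under the projection $\prod_p\text{H}^{2n-2p}(X^{an},\compl)\to\text{H}^{2n}(X^{an},\compl)$; that is, $\chi\bigl((-)^{an}\circ\iota(\eta)\bigr)=\bigl(\tilde\alpha\wedge\text{Td}(T_X)\bigr)_{2n}$.

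It then remains to check $\bigl(\tilde\alpha\wedge\text{Td}(T_X)\bigr)_{2n}=\bigl(I_{HKR}(\eta)\wedge\text{Td}(T_X)\bigr)_{2n}$, which completes the argument since the right-hand side is the bottom edge of the diagram evaluated at $\eta$. Writing $\text{Td}(T_X)=\sum_{k\ge 0}\text{td}_k$ with $\text{td}_k\in\text{H}^{k,k}(X^{an},\compl)$ and using $\text{H}^{2n}(X^{an},\compl)=\text{H}^{n,n}(X^{an},\compl)$, a term $\tilde\alpha_p\wedge\text{td}_k$ with $p+k=n$ can contribute to the degree-$2n$ part only through its $(n,n)$-component, and by Hodge type that component depends only on the $(p,p)$-part of $\tilde\alpha_p$; hence $\tilde\alpha$ may be replaced by its Hodge projection $I_{HKR}(\eta)$. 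The one step that is not pure bookkeeping is establishing the compatibilities collected in the first paragraph — that $\chi$ and $(-)^{an}\circ\iota$ are realized at the mixed-complex level, so that passing between $\text{HC}_0^{-}$, $\text{HC}_0^{per}$ and $\text{HH}_0$ is harmless; granting this, the proposition is a formal consequence of Propositions~\ref{two} and~\ref{three}.
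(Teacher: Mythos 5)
Your argument is correct and follows essentially the same route as the paper: lift a Hochschild class to $\text{HC}_0^{-}(\mathcal O_X)$, push to $\text{HC}_0^{per}$ and apply Proposition \ref{two}, descend through Proposition \ref{three}(a)--(c), and finish with the Hodge-type observation that only the $(p,p)$-components of the lift can contribute to the degree-$2n$ part against $\text{Td}(T_X)\in\oplus_p\text{H}^{p,p}(X^{an},\compl)$. The one point where you diverge is the justification of the surjectivity of $\text{HC}_0^{-}(\mathcal O_X)\to\text{HH}_0(\mathcal O_X)$: your claim that $\text{HC}_0^{-}(\mathcal O_X)$ coincides with $\text{HC}_0^{per}(\mathcal O_X)\cong\prod_p\text{H}^{2p}(X^{an},\compl)$ is not literally right (negative cyclic homology yields the Hodge-filtration pieces $F^p\text{H}^{2p}_{dR}$ rather than all of $\text{H}^{2p}_{dR}$), although the surjectivity onto $\text{H}^{p,p}$ still follows; the paper instead reduces, via $I_{HKR}$ and GAGA, to the surjectivity of $\text{H}^p\bigl(X^{an},\text{Ker}(d:\Omega^p_{X^{an}}\to\Omega^{p+1}_{X^{an}})\bigr)\to\text{H}^p(X^{an},\Omega^p_{X^{an}})$, witnessed by closed $(p,p)$-forms.
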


\begin{proof}
We note that the natural map $\text{HC}^{-}_{0}(\mathcal O_X) \rar \text{HH}_0(\mathcal O_X)$ is surjective. Indeed, after applying $I_{HKR}$, we are reduced to verifying that $\text{H}^{p}(X,\text{Ker}(d:\Omega^p_X \rar \Omega^{p+1}_X) \rar \text{H}^p(X,\Omega^p_X)$ is surjective. By Serre's GAGA, it suffices to verify that $\text{H}^{p}(X^{an},\text{Ker}(d:\Omega^p_{X^{an}} \rar \Omega^{p+1}_{X^{an}}) \rar \text{H}^p(X^{an},\Omega^p_{X^{an}})$ is surjective. This follows from the fact that any closed $(p,p)$-form defines an element of $\text{H}^{p}(X^{an},\text{Ker}(d:\Omega^p_{X^{an}} \rar \Omega^{p+1}_{X^{an}})$ as well.

 Hence, any $y \in \text{HH}_0(\mathcal O_X)$ lifts to an element $\tilde{y} \in \text{HC}^{-}_{0}(\mathcal O_X)$. For notational brevity, we denote $\chi \circ (-)^{an}$ by $\chi$ for the rest of this proof. Now, $\chi \circ \iota(y)= (\chi \circ \iota(\tilde{y}))_{2n}$ by Proposition \ref{three}, parts (b) and (c). Further, $(\chi \circ \iota(\tilde{y}))_{2n}= (I_{HKR}(\tilde{y}) \wedge \text{Td}(T_X))_{2n}$ by Proposition \ref{two}. Finally, $(I_{HKR}(\tilde{y}) \wedge \text{Td}(T_X))_{2n}=(I_{HKR}(y) \wedge \text{Td}(T_X))_{2n}$ by Proposition 3, part (a) and the fact that $\text{Td}(T_X) \in \oplus_p \text{H}^{p,p}(X^{an},\compl)$.\\
\end{proof}

The following proposition is a crucial point in this note.

\begin{prop} \la{five}
The following diagram commutes.
$$\begin{CD}
\text{HH}_0(\mathcal O_X) @>D>> {\mathbb H}^0(X,\Delta^!\Delta_*\omega_X)\\
@VV{(-)^{an} \circ \iota}V        @VV{(\widehat{I_{HKR}}^{-1}(-))_{2n}}V\\
\text{HH}_0(\mathcal D_X) @>\chi>>  \text{H}^{2n}(X^{an},\compl)
\end{CD}$$

\end{prop}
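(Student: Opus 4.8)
The plan is to identify both legs of the square with the single natural ``integration''/trace map that underlies the Kashiwara--Schapira formalism, using the compatibility of $D = \text{td}$ with the various module and trace structures already set up. Concretely, I would first recall that $D:\mathbb{H}^{-\bullet}(X,\Delta^*\Delta_*\mathcal O_X) \to \mathbb{H}^{-\bullet}(X,\Delta^!\Delta_*\omega_X)$ is the map on hypercohomology induced by the morphism $\text{td}$ of Proposition \ref{one}, and that in degree $0$ the composite $\text{HH}_0(\mathcal O_X) \xrightarrow{D} {\mathbb H}^0(X,\Delta^!\Delta_*\omega_X) \xrightarrow{(\widehat{I_{HKR}}^{-1}(-))_{2n}} \text{H}^{2n}(X^{an},\compl)$ is, after the HKR identifications, precisely the map $y \mapsto (I_{HKR}(y)\wedge \text{eu}(\mathcal O_X))_{2n}$, where $\text{eu}(\mathcal O_X) = \widehat{I_{HKR}}^{-1}(D(\text{ch}(\mathcal O_X)))$ is the Euler class introduced just before Section 3. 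This is exactly the Corollary to Proposition \ref{one} (i.e. $\tau(\alpha) = \alpha\wedge\tau(1)$), read off in degree $0$ after passing to Betti cohomology via GAGA.

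Next I would analyze the lower-left leg $\chi\circ(-)^{an}\circ\iota$. The key input is that the trace density map $\chi:\text{HH}_0(\mathcal D_{X^{an}}) \to \text{H}^{2n}(X^{an},\compl)$ is the algebraic-index-theorem trace: by the Engeli--Felder/PPT/Willwacher construction it is the canonical Hochschild-chain trace on differential operators, which on the image of $\iota$ computes the pushforward to a point of the corresponding Hochschild class on $X^{an}$. Under this description $\chi\circ(-)^{an}\circ\iota(y)$ is, by construction, the image of $y\in\text{HH}_0(\mathcal O_X)$ under the canonical trace $\text{HH}_0(\mathcal O_X)\to\text{H}^{2n}(X^{an},\compl)$ attached to the Hochschild--Kostant--Rosenberg $\Delta^!\Delta_*\omega_X$-valued pairing in \cite{KS}. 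So the content of the proposition is that this analytic trace agrees with the composite through $\Delta^!\Delta_*\omega_X$ and $\widehat{I_{HKR}}$. I would establish this by comparing both with the universal source: both arise from the adjunction morphism $\Delta^!\Delta_*\omega_X \to \omega_X \to \compl[2n]$ (integration over $X^{an}$), the first directly through the $\Delta^*\Delta_*\mathcal O_X$-module structure of Proposition \ref{one} and the second through the identification, at the level of completed Hochschild chains $\widehat{\mathcal C_\bullet}(\mathcal O_X)$ (equivalently $\mathcal C^{red}_\bullet(\mathcal O_X)$, as clarified at the end of Section 2), of $\mathcal O_X\hookrightarrow\mathcal D_X$ with the standard deformation-quantization situation in which \cite{BNT} identifies the cyclic trace.

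The main obstacle I anticipate is precisely this last identification: matching the \cite{KS}-theoretic trace (coming from Serre duality / the $\Delta^!$-formalism and the morphism $\text{td}$) with the deformation-quantization trace density $\chi$ used in the algebraic index theorem of \cite{BNT}. These are built in different languages --- one via $\text{D}^b(\mathcal O_X)$ and Grothendieck duality, the other via explicit cyclic cochains on $\mathcal D_{X^{an}}$ and Fedosov-type constructions --- and the compatibility, while ``morally'' the statement of the algebraic index theorem, requires care about normalizations (the placement of $\text{Td}(T_X)$ versus $\widehat{A}$, the degree shift by $2n$, and the $(p,p)$-projections). I would handle this by reducing, via Proposition \ref{three}, to the degree-$0$ Hochschild statement and then invoking Proposition \ref{two} --- which is stated to be ``closely related to a Theorem in \cite{BNT}'' --- as the black box that encodes exactly this comparison; the remaining work is bookkeeping with the commutative diagrams \eqref{CD1} and those of Propositions \ref{three} and \ref{four}, together with the observation that $D(\text{ch}(\mathcal O_X))$ and $(-)^{an}\circ\iota$ of the fundamental class $1\in\text{HH}_0(\mathcal O_X)$ both land on $\text{Td}(T_X)$ once the square of Proposition \ref{five} is known to commute, closing the circle with Proposition \ref{four}.
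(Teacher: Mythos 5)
Your reduction of the right-hand leg to $y \mapsto (I_{HKR}(y)\wedge \text{eu}(\mathcal O_X))_{2n}$ via the Corollary to Proposition \ref{one} is correct, but it exposes the circularity of the rest of your argument: given Proposition \ref{four}, the commutativity of the square in Proposition \ref{five} is \emph{equivalent} to the identity $\text{eu}(\mathcal O_X)=\text{Td}(T_X)$, which is the theorem the whole note is driving at. Your closing step --- that $D(\text{ch}(\mathcal O_X))$ and $(-)^{an}\circ\iota$ applied to $1$ ``both land on $\text{Td}(T_X)$ once the square of Proposition \ref{five} is known to commute'' --- assumes the conclusion. Likewise, Proposition \ref{two} cannot serve as the black box you want: it lives entirely on the deformation-quantization side (it relates $\chi\circ\iota$ to $I_{HKR}$ and cup product with $\text{Td}(T_X)$) and never mentions $D$ or $\widehat{I_{HKR}}$; it is an input to Proposition \ref{four}, not a bridge between the Kashiwara--Schapira morphism $\text{td}$ and the trace density $\chi$. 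That bridge is exactly what Proposition \ref{five} must supply, and no bookkeeping with Propositions \ref{two}--\ref{four} and diagram \eqref{CD1} produces it.

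The missing idea is to compose both legs with the isomorphism $\int_{X^{an}}\colon \text{H}^{2n}(X^{an},\compl)\to\compl$ and identify each composite with the pushforward to a point. For the right-hand leg, the proof of Proposition 5.2.3 of \cite{KS} shows that $\widehat{I_{HKR}}^{-1}\circ D$ intertwines the Hochschild-homology pushforward $\pi_*\colon \text{HH}_0(\mathcal O_X)\to\compl$ with $\int_{X^{an}}$. For the left-hand leg one needs two nontrivial inputs absent from your proposal: first, the isomorphism $\text{HH}_{\bullet}(\text{Perf}(\mathcal D_X))\cong\text{HH}_{\bullet}(\mathcal D_{X^{an}})$ of Proposition \ref{pTT}, which lets the categorical pushforward $\pi^{\mathcal D}_*$ (in the sense of \cite{T} and \cite{K}) be regarded as a linear functional on $\text{HH}_0(\mathcal D_{X^{an}})$; and second, the theorem of \cite{EnFe} and \cite{Ram2} that $\pi^{\mathcal D}_*=\int_{X^{an}}\circ\,\chi$, i.e.\ that the Engeli--Felder trace density computes the canonical categorical trace on perfect complexes of $\mathcal D$-modules. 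Together with the elementary compatibility $\pi_*=\pi^{\mathcal D}_*\circ(-)^{an}\circ\iota$ (which follows from GAGA and $(E\otimes_{\mathcal O_X}\mathcal D_X)\otimes^{\mathbb L}_{\mathcal D_X}\mathcal O_X\simeq E$), this closes the square without ever invoking $\text{eu}(\mathcal O_X)=\text{Td}(T_X)$. Your remark about comparing both legs with the ``universal source'' $\Delta^!\Delta_*\omega_X\to\omega_X\to\compl[2n]$ gestures in this direction, but as written the proposal does not contain the two inputs that make the comparison go through.
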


\begin{proof}
Let $\pi:X \rar pt$ be the natural projection. The object $\mathcal O_X$ of $\text{Perf}(\mathcal O_{X \times pt})$ induces a morphism $\pi_*: \text{Perf}(\mathcal O_X) \rar \text{Perf}(pt)$ in the homotopy category $\text{Ho}(dg-cat)$ of DG-categories modulo quasiequivalences (see Section 8 of \cite{T}). The notation $\pi_*$ is justified by the fact that the functor from $\text{D}(\text{Perf}(X))$ to $\text{D}(\text{Perf}(pt))$ induced by $\pi_*$ is indeed the derved pushforward $\pi_*$. This induces a map $\pi_*:\text{HH}_0(\mathcal O_X) \rar \text{HH}_0(\mathcal O_{pt})=\compl$ wich coincides with the pushforward on Hochschild homologies from \cite{KS} (see Theorem 5 of \cite{Ram3}). On the other hand, one has $\pi_*:\oplus_p \text{H}^{p,p}(X^{an},\compl) \rar \text{H}^0(pt,\compl)=\compl$, which coincides with $\int_{X^{an}}$. By the proof of Proposition 5.2.3 of \cite{KS}, ${\widehat{I_{HKR}}}^{-1} \circ D$ commutes with $\pi_*$. On the other hand, let $\text{Perf}(\mathcal D_X)$ denote the DG-category of perfect complexes of (right) $\mathcal D_X$-modules that are quasi-coherent as $\mathcal O_X$-modules. One has a map $\pi^{\mathcal D}_*: \text{Perf}(\mathcal D_X) \rar \text{Perf}(pt)$ in $\text{Ho}(dg-cat)$. The functor incuced by $\pi^{\mathcal D}_*$ on derived categories  maps $M \in \text{D}(\text{Perf}(\mathcal D_X))$ to $\pi_*(M^{an} \otimes^{\mathbb L}_{\mathcal D_{X^{an}}} \mathcal O_{X^{an}})$\footnote{The latter is indeed in $ \text{D}(\text{Perf}(pt))$: see \cite{ScS} for instance.}.
By Section 8 of \cite{T}, $\pi^{\mathcal D}_*$ induces a map $\pi^{\mathcal D}_*: \text{HH}_0(\text{Perf}(\mathcal D_X)) \rar \text{HH}_0(pt) \cong \compl$ on Hochschild homologies. By Proposition \ref{pTT} at the end of this section, the composite map
\begin{equation} \la{dperf} \text{HH}_{\bullet}(\text{Perf}(\mathcal D_X)) \rar \text{HH}_{\bullet}(\mathcal D_X) \stackrel{(-)^{an}}{\rar} \text{HH}_{\bullet}(\mathcal D_{X^{an}}) \end{equation} is an isomorphism (the first map in the above composition is the trace map from Section 4 of \cite{K}).  $\pi^{\mathcal D}_*$ therefore, induces a $\compl$-linear functional on $\text{HH}_{0}(\mathcal D_{X^{an}})$, which we shall continue to denote by $\pi^{\mathcal D}_*$. It follows from \cite{EnFe} and \cite{Ram2} that
$$\pi^{\mathcal D}_*=\int_{X^{an}} \circ \chi: \text{HH}_0(\mathcal D_{X^{an}}) \rar \compl \text{.}$$
Since $\int_{X^{an}}:\text{H}^{2n}(X^{an},\compl) \rar \compl$ is an isomorphism, the required proposition follows once we check that $\pi_*=\pi^{\mathcal D}_* \circ (-)^{an} \circ \iota$. This follows from the fact that the diagram
$$\begin{CD}
\text{HH}_0(\mathcal O_X) @>(-)^{an} \circ \iota>> \text{HH}_0(\mathcal D_X)\\
@AAA   @AAA\\
\text{HH}_0(\text{Perf}(\mathcal O_X)) @>(-) \otimes_{\mathcal O_X} {\mathcal D_X} >> \text{HH}_0(\text{Perf}(\mathcal D_X))
\end{CD}$$
(the left vertical arrows being the trace isomorphism from Section 4 of \cite{K} and the right vertical arrow being the composite map \eqref{dperf}) commutes as well as the observation that for $E \in \text{D}(\text{Perf}(\mathcal O_X))$,
$$ \pi^{\mathcal D}_* \iota (E)=\pi_*((E \otimes_{\mathcal O_X} \mathcal D_X)^{an} \otimes^{\mathbb L}_{\mathcal D_{X^{an}}} \mathcal O_{X^{an}})= \pi_*E^{an}$$ (recall that $\pi_*E=\pi_*E^{an}$ in $\text{D}(\text{Perf}(pt))$ by Serre's GAGA).
\end{proof}

By Propositions \ref{four} and \ref{five}
$$ (I_{HKR}(\alpha) \wedge \tau(1))_{2n}=(I_{HKR}(\alpha) \wedge \text{Td}(T_X))_{2n} $$
for all $\alpha \in \text{HH}_0(\mathcal O_X)$. Hence, $\text{eu}(\mathcal O_X)=\tau(1)=\text{Td}(T_X)$. To complete the proof of Proposition 5, we sketch the proof of the following proposition.

\begin{prop} \la{pTT}
$\text{HH}_{\bullet}(\text{Perf}(\mathcal D_X)) \cong \text{HH}_{\bullet}(\mathcal D_{X^{an}})$. This isomorphism is realized by the composite map \eqref{dperf}.
\end{prop}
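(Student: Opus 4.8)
The plan is to establish the claimed isomorphism $\text{HH}_{\bullet}(\text{Perf}(\mathcal D_X)) \cong \text{HH}_{\bullet}(\mathcal D_{X^{an}})$ by factoring the composite map \eqref{dperf} through two independently-known identifications: first a purely algebraic Morita-type statement comparing $\text{HH}_{\bullet}$ of the DG-category $\text{Perf}(\mathcal D_X)$ with $\text{HH}_{\bullet}$ of the sheaf of algebras $\mathcal D_X$ (computed as hypercohomology of a Hochschild chain complex of sheaves), and then a GAGA-type comparison between the algebraic and analytic versions. For the first step I would invoke the general machinery relating Hochschild homology of a DG-category of perfect complexes of sheaves on $X$ to the hypercohomology over $X$ of the local Hochschild chain complex; this is exactly the content of the trace map from Section 4 of \cite{K} together with a localization/descent argument, and it applies verbatim to the sheaf of rings $\mathcal D_X$ in place of $\mathcal O_X$ since $\mathcal D_X$ is a sheaf of (noncommutative) $\mathcal O_X$-algebras which is quasi-coherent and of finite homological dimension. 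The key point here is that $\text{Perf}(\mathcal D_X)$, the DG-category of perfect complexes of right $\mathcal D_X$-modules that are quasi-coherent over $\mathcal O_X$, has a good local-to-global spectral sequence, so that $\text{HH}_{\bullet}(\text{Perf}(\mathcal D_X)) \cong \mathbb H^{-\bullet}(X, \mathcal{CC}_{\bullet}(\mathcal D_X))$, the first arrow in \eqref{dperf} being precisely this trace isomorphism.

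Next I would handle the passage $\mathbb H^{-\bullet}(X, \mathcal{CC}_{\bullet}(\mathcal D_X)) \to \mathbb H^{-\bullet}(X^{an}, \mathcal{CC}_{\bullet}(\mathcal D_{X^{an}}))$, i.e. the map $(-)^{an}$. The strategy is to compute both sides via the HKR-type resolution of the Hochschild chain complex of $\mathcal D$: by results of \cite{EnFe}, \cite{PPT}, \cite{Will} (and the trace-density computations cited in the excerpt) the Hochschild homology sheaves of $\mathcal D_X$ are, up to the relevant shift, supported on a small complex whose hypercohomology is $\prod_p \text{H}^{2n-2p}(X^{an},\compl)$ — and crucially this complex is built out of $\mathcal O_X$-coherent pieces, so Serre's GAGA applies. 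Concretely, $f^{-1}\mathcal{CC}_{\bullet}^{per}(\mathcal D_X) \to \mathcal{CC}_{\bullet}^{per}(\mathcal D_{X^{an}})$ is a quasi-isomorphism onto an analytic complex whose cohomology sheaves are the analytifications of coherent sheaves on $X$, and $Rf_*$ of the constant-sheaf-type answer matches on both sides. Thus the analytification map on $\text{HH}_{\bullet}$ is an isomorphism, and composing with the trace isomorphism of the previous paragraph gives that \eqref{dperf} is an isomorphism realizing $\text{HH}_{\bullet}(\text{Perf}(\mathcal D_X)) \cong \text{HH}_{\bullet}(\mathcal D_{X^{an}})$.

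The main obstacle I anticipate is the first step: making precise the assertion that the trace map of \cite{K} identifies $\text{HH}_{\bullet}(\text{Perf}(\mathcal D_X))$ with the hypercohomology of the sheafy Hochschild complex of $\mathcal D_X$. For $\mathcal O_X$ this is standard (it underlies the whole HKR picture used earlier in the paper), but $\mathcal D_X$ is genuinely noncommutative and one must check that the descent argument — reducing the global statement to the affine-local one via the local-to-global spectral sequence for $\text{HH}$ of DG-categories — still goes through. The affine-local input is that for $U = \Spec R$ affine with $\Gamma(U,\mathcal D_X) = D$, one has $\text{HH}_{\bullet}(\text{Perf}(D)) \cong \text{HH}_{\bullet}(D)$, which is the classical agreement of Hochschild homology of a ring with that of its category of perfect modules; the gluing then uses that $\text{HH}_{\bullet}$ of DG-categories satisfies Zariski (even Nisnevich) descent, which is where one invokes the relevant results of \cite{K} (and, if needed, the general localization theorem for Hochschild homology). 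Once the local statement and descent are in hand, everything else is a comparison of two descriptions of the same cohomology and a GAGA invocation, which I would present only in outline. This is why the excerpt says we merely "sketch" the proof.
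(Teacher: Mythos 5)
Your overall architecture --- first identifying $\text{HH}_{\bullet}(\text{Perf}(\mathcal D_X))$ with a sheaf-level Hochschild homology of $\mathcal D_X$ via Keller's trace map plus Zariski descent, then comparing the algebraic and analytic sides --- matches the paper's, and your first step is essentially what the paper does: the descent input is the analog for $\mathcal D_X$ of Theorem 5.5 of \cite{K} (Thomason--Trobaugh), supplied by Propositions 3.3.1--3.3.3 of \cite{DY}, and a Mayer--Vietoris argument reduces everything to the case of $X$ affine with trivial tangent bundle, where the trace map for the honest algebra $D_X=\Gamma(X,\mathcal D_X)$ is an isomorphism. That part of your sketch is fine, if less specific about the references.

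The gap is in your second step. You assert that $(-)^{an}:\text{HH}_{\bullet}(\mathcal D_X)\rar\text{HH}_{\bullet}(\mathcal D_{X^{an}})$ is an isomorphism because the Hochschild complex of $\mathcal D_X$ is ``built out of $\mathcal O_X$-coherent pieces, so Serre's GAGA applies.'' This is not the case. The local Hochschild homology of the sheaf of differential operators is de Rham--theoretic, not coherent: for affine $U$ one has $\text{HH}_i(\mathcal D(U))\cong \text{H}^{2n-i}_{DR}(U)$, and the presheaves $U\mapsto \text{H}^{2n-i}_{DR}(U)$ are nowhere near quasi-coherent. Moreover, after the descent step one is working over affine (hence non-proper) pieces, where Serre's GAGA says nothing at all. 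The correct comparison theorem is Grothendieck's theorem on algebraic de Rham cohomology \cite{Groth}, and to reach it one needs an ingredient you have not supplied: filter algebraic and holomorphic differential operators by order and compare the resulting spectral sequences of Section 3.3 of \cite{Bryl}. On $E^2$-terms the map $(-)^{an}$ becomes the map from the algebraic de Rham complex of forms on $T^*X$ (algebraic along the fibres) to the corresponding Dolbeault-type complex on $X^{an}$, and it is only at that point that the algebraic-versus-analytic de Rham comparison can be invoked. Without the order filtration and the identification of the $E^2$-terms there is no complex of coherent (or otherwise GAGA-amenable) sheaves in sight, and your claim that $(-)^{an}$ is an isomorphism is unsupported as stated.
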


\begin{proof}
One has to verify that the arguments of B. Keller in Section 5 of \cite{K} go through when $\mathcal O_X$ is replaced by $\mathcal D_X$. The crucial part here is the analog of Theorem 5.5 of \cite{K} (originally proven as Propositions 5.2.2-5.2.4 of \cite{TT}) when $\mathcal O_X$ is replaced by $\mathcal D_X$. This is done in Propositions 3.3.1-3.3.3 of \cite{DY} (which prove the analog of Theorem 5.5 of \cite{K} in a much more general setting: in particular, when $\mathcal O_X$ is replaced by $\mathcal R_X$ where $\mathcal R_X$ is a sheaf of quasicoherent $\mathcal O_X$-algebras (possibly noncommutative)). Let $Y$ be any quasi-compact, quasi-separated scheme over $\compl$ with $V$,$W$ quasi-compact open subschemes of $Y$ such that $Y=V \cup W$. Following the arguments of Sections 5.6 and 5.7 of \cite{K}, one obtains a morphism of Mayer-Vietoris sequences
$$\begin{CD}
\text{HH}_i(\text{Perf}(\mathcal D_Y)) @>>> \text{HH}_i(\text{Perf}(\mathcal D_V)) \oplus \text{HH}_i(\text{Perf}(\mathcal D_W)) @>>> \text{HH}_i(\text{Perf}(\mathcal D_{V \cap W})) @>>> \text{HH}_{i-1}(\text{Perf}(\mathcal D_Y))\\
@VVV @VVV @VVV @VVV\\
\text{HH}_i(\mathcal D_{Y^{an}}) @>>> \text{HH}_i(\mathcal D_{V^{an}}) \oplus \text{HH}_i(\mathcal D_{W^{an}}) @>>> \text{HH}_i(\mathcal D_{(V \cap W)^{an}}) @>>> \text{HH}_{i-1}(\mathcal D_{Y^{an}})
\end{CD}$$
(for each $i \in \mathbb Z$). The vertical arrows in the above diagram are induced by the composite map \eqref{dperf}. As in Section 5.9 of \cite{K}, we may then reduce the proof of the desired proposition to proving the desired proposition when $X$ is affine with trivial tangent bundle. For the rest of this proof, we assume that this is indeed the case.

Since $\mathcal D_X-\text{mod}$ denote the Abelian category of (right) $\mathcal D_X$-modules are quasi-coherent $\mathcal O_X$-modules. There is an equivalence of abelian categories between $D_X-\text{mod}$ and $\mathcal D_X-\text{mod}$, where  $D_X:=\Gamma(X,\mathcal D_X)$ (see \cite{DY}, example 1.1.5). Hence, one has an equivalence of DG-categories between $\text{Perf}(D_X)$ and $\text{Perf}(\mathcal D_X)$ (this follows, for instance, from Lemma 2.2.1 of \cite{DY}). This equivalence induces an isomorphsm \\ $\text{HH}_{\bullet}(\text{Perf}(D_X)) \stackrel{\cong}{\rar} \text{HH}_{\bullet}(\text{Perf}(\mathcal D_X))$. Further, there is a natural map $\text{HH}_{\bullet}(D_X) \rar \text{HH}_{\bullet}(\mathcal D_X)$ such that the following diagram commutes.
$$\begin{CD}
\text{HH}_{\bullet}(\text{Perf}(D_X)) @>\cong>>   \text{HH}_{\bullet}(\text{Perf}(\mathcal D_X))\\
@VV{\cong}V    @VVV\\
\text{HH}_{\bullet}(D_X) @>>> \text{HH}_{\bullet}(\mathcal D_X)
\end{CD}$$
In the above diagram, the vertical arrows are trace maps from Section 4 of \cite{K}. For honest algebras, they yield isomorphisms. We are therefore, reduced to verifying that the composite map
\begin{equation} \la{qismf} \text{HH}_{\bullet}(D_X) \rar \text{HH}_{\bullet}(\mathcal D_X) \stackrel{(-)^{an}}{\rar} \text{HH}_{\bullet}(\mathcal D_{X^{an}}) \end{equation}
is an isomorphism. Let $\mathcal D^{\bullet}_{X^{an}}$ denote the Dolbeault resolution of the sheaf $\mathcal D_{X^{an}}$. This is a sheaf of DG-algebras on $X$. Let $\text{C}_{\bullet}(\mathcal D^{\bullet}_{X^{an}})$ denote the complex of global sections of the complex of completed Hochschild chains on $X$ (see \cite{Ram2}, Section 3.3). There is a natural map of complexes $\text{C}_{\bullet}(D_X) \rar \text{C}_{\bullet}(\mathcal D^{\bullet}_{X^{an}})$ inducing \eqref{qismf} on homology. To prove that this is a quasi-isomorphism, we filter algebraic and holomorphic differential operators by order and consider the induced map on the $E^2$-terms of the spectral sequences from Section 3.3 of \cite{Bryl}. This turns out to be induced on homology by the natural map from the algebraic De-Rham complex $(\Omega^{2n-\bullet}(T^*X),d_{DR}^{alg})$ to the Dolbeault complex $(\Gamma(X^{an},\Omega^{2n-\bullet}_{T^*X^{an}} \otimes_{\mathcal O_{X^{an}}} \Omega^{0,\bullet}_{X^{an}}),d+\bar{\partial})$ \footnote{Here, $\Omega^{\bullet}_{T^*X^{an}}$ is the complex of sheaves on $X^{an}$ whose sections on each open subset $U$ of $X^{an}$ are holomorphic forms on $T^*U$ that are algebraic along the fibres of the projection $T^*U \rar U$. $d$ is the (holomorphic) De-Rham differential on this complex.}. That this is a quasiisomorphism amounts to the assertion that natural map from the algebraic De-Rham complex of $X$ to the smooth De-Rham complex of $X^{an}$ is a quasiisomorphism (see \cite{Groth}).

\end{proof}

\section{A proof of Proposition \ref{two}.}
One notes that the following diagram commutes.
$$\begin{CD}
\text{HH}_0(\mathcal O_X) @>(-)^{an}>> \text{HH}_0(\mathcal O_{X^{an}})\\
@VV{\iota}V                             @V{\iota}VV\\
\text{HH}_0(\mathcal D_X) @>(-)^{an}>> \text{HH}_0(\mathcal D_{X^{an}})
\end{CD}$$
To prove Proposition \ref{two}, it therefore, suffices to show that the following diagram commutes (where $Y:=X^{an}$).
\begin{equation} \la{p2anal}
\begin{CD}
\text{HC}_0^{per}(\mathcal O_Y) @>>\iota> \text{HC}_0^{per}(\mathcal D_{Y})\\
@VV{I_{HKR}}V   @V{\chi}VV\\
\prod_{p=-\infty}^{\infty}\text{H}^{2p}(Y,\compl) @>(-\wedge \text{Td}(T_Y))>> \prod_{p=-\infty}^{\infty}\text{H}^{2n-2p}(Y,\compl)
\end{CD}
\end{equation}
In other words, we now work with a complex manifold rather than an algebraic variety. Recall that there is a deformation quantization $\A^{\hb}_{T^*Y}$ of $\mathcal O_{T^*Y}[[\hb]]$ such that $\pi^{-1}\mathcal D_Y \hookrightarrow \A^{\hb}_{T^*Y}[\hb^{-1}]$ and $\A^{\hb}_{T^*Y}[\hb^{-1}]$ is flat over $\pi^{-1}\mathcal D_Y $. Here, $\pi:T^*Y \rar Y$ is the canonical projection.

In this situation, one has a natural map $\pi^{-1}:\text{HC}_0^{per}(\mathcal D_Y) \rar \text{HC}_0^{per}(\A^{\hb}_{T^*Y}[\hb^{-1}])$. Indeed, if $\mathcal{U}:=\{U_i\}$ is a good open cover of $Y$, one has a natural map of complexes between the periodic cyclic-Cech complex $\text{C}^{\vee}(\mathcal{U}, \mathcal{CC}_{\bullet}^{per}(\mathcal D_Y))$ and  $\text{C}^{\vee}(\mathcal{V}, \mathcal{CC}_{\bullet}^{per}(\A^{\hb}_{T^*Y}[\hb^{-1}]))$ where  $\mathcal{V}:=\{\pi^{-1}(U_i)\}$. Similarly, one has a natural map $\pi^{-1}:\text{HC}_0^{per}(\mathcal O_Y) \rar \text{HC}_0^{per}(\A^{\hb}_{T^*Y})$. Further, one has a trace density map $\chi_{FFS}:\text{HC}_0^{per}(\A^{\hb}_{T^*Y}[\hb^{-1}]) \rar\prod_p \text{H}^{2n-2p}(T^*Y,\compl)((\hb))$ (see \cite{BNT},\cite{EnFe},\cite{FFS},\cite{Will}). Note that we can compose $\chi_{FFS}$ with the natural map $\beta: \text{HC}_0^{per}(\A^{\hb}_{T^*Y}) \rar \text{HC}_0^{per}(\A^{\hb}_{T^*Y}[\hb^{-1}])$. We shall abuse notation to denote $\chi_{FFS} \circ \beta$ by $\chi_{FFS}$. Let $i:Y \rar T^*Y$ denote inclusion as the zero section. The following proposition is clear.

\begin{prop} \la{six}
The diagram
$$\begin{CD}
\text{HC}_0^{per}(\mathcal D_Y) @>\pi^{-1}>> \text{HC}_0^{per}(\A^{\hb}_{T^*Y}[\hb^{-1}])\\
@VV{\chi}V       @V{\chi_{FFS}}VV\\
\prod_p \text{H}^{2n-2p}(Y,\compl)((\hb)) @>\pi^*>> \prod_p \text{H}^{2n-2p}(T^*Y,\compl)((\hb))
\end{CD}$$
 commutes. Further, $i^* \circ \pi^* =\text{id}$ on $\prod_p \text{H}^{2n-2p}(Y,\compl)((\hb))$.
\end{prop}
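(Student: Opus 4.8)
The plan is to derive both assertions directly from the construction of $\chi$ together with the elementary homotopy-invariance of cohomology under a vector-bundle projection; as the paper indicates, no genuinely new argument is needed.

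I would begin with the last sentence. The map $\pi\colon T^*Y\to Y$ is the holomorphic cotangent bundle, hence in particular a topological real vector bundle of rank $2n$ over $Y$, and fibrewise scalar multiplication exhibits the zero section $i\colon Y\hookrightarrow T^*Y$ as a deformation retract of $T^*Y$. Consequently $\pi^*$ is an isomorphism on $\text{H}^\bullet(-,\compl)$; since $\pi\circ i=\text{id}_Y$, functoriality gives $i^*\circ\pi^*=\text{id}$ on $\text{H}^\bullet(Y,\compl)$, and then, $\pi^*$ being invertible, also $\pi^*\circ i^*=\text{id}$ on $\text{H}^\bullet(T^*Y,\compl)$. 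Extending $\compl((\hb))$-linearly gives both identities on the $\prod_p\text{H}^{2n-2p}$-spaces appearing in the statement; in particular the last assertion holds, and $\pi^*$ is injective there.

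For the commutative square I would use that $\chi$ on $\text{HC}_0^{per}(\mathcal D_Y)$ is, by its very construction, the composite $\chi=i^*\circ\chi_{FFS}\circ\pi^{-1}$: one transports a periodic cyclic chain on $\mathcal D_Y$ to the deformation quantization $\A^\hb_{T^*Y}[\hb^{-1}]$ along the canonical map $\pi^{-1}$ (built on the periodic cyclic--Cech complexes for a good cover $\mathcal U=\{U_i\}$ of $Y$ and the preimage cover $\mathcal V=\{\pi^{-1}(U_i)\}$ of $T^*Y$, and independent of $\mathcal U$ by the usual refinement argument), applies the Feigin--Felder--Shoikhet trace density $\chi_{FFS}$, and restricts to the zero section via $i^*$; the identification of this composite with the Engeli--Felder trace density, together with its cyclic/periodic refinement, is precisely what \cite{EnFe}, \cite{PPT}, \cite{Will} (and \cite{BNT}, \cite{FFS}) supply. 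Granting this, for $c\in\text{HC}_0^{per}(\mathcal D_Y)$ one has $\pi^*\chi(c)=\pi^*i^*\bigl(\chi_{FFS}(\pi^{-1}(c))\bigr)=\chi_{FFS}(\pi^{-1}(c))$ by the identity $\pi^*\circ i^*=\text{id}$ of the previous paragraph, which is exactly the commutativity of the diagram. The only substantive input is that $\chi$ has been set up in this factored form $i^*\circ\chi_{FFS}\circ\pi^{-1}$; everything else — the deformation-retraction computation, the $\compl((\hb))$-linear extension, and the Cech-level definition of $\pi^{-1}$ with its independence of the chosen good cover — is formal, which is why the proposition is stated as clear, and I expect no real obstacle beyond correctly invoking those comparison results.
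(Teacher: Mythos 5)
The paper gives no proof of this proposition at all --- it is introduced with ``The following proposition is clear'' --- so the comparison is really about whether your unpacking is a sound account of why it is clear, and for the most part it is. Your argument for the second assertion is complete and standard: fibrewise scalar multiplication retracts $T^*Y$ onto the zero section, so $\pi^*$ is an isomorphism on $\text{H}^{\bullet}(-,\compl)$, and $\pi\circ i=\text{id}_Y$ gives $i^*\circ\pi^*=\text{id}$ and hence also $\pi^*\circ i^*=\text{id}$ on $\text{H}^{\bullet}(T^*Y,\compl)$. For the square, however, note that your reduction is essentially circular: since $\pi^*$ is an isomorphism with inverse $i^*$, the commutativity of the square is \emph{equivalent} to the factorization $\chi=i^*\circ\chi_{FFS}\circ\pi^{-1}$, so invoking that factorization ``by its very construction'' only works if the paper's $\chi$ is literally defined that way. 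The paper instead describes $\chi$ as ``a construction very similar to the trace density construction of Engeli--Felder,'' which in \cite{EnFe} is built directly on $Y$ out of the Feigin--Felder--Shoikhet cocycle and a Fedosov-type connection, not a priori by passing to $\A^{\hb}_{T^*Y}[\hb^{-1}]$ and restricting to the zero section; the compatibility of the two trace densities under the flat embedding $\pi^{-1}\mathcal D_Y\hookrightarrow\A^{\hb}_{T^*Y}[\hb^{-1}]$ is exactly the (routine but nonempty) content of the square, and it should be cited or checked at the chain level on the \v{C}ech--cyclic complexes rather than asserted. Since the paper supplies no more detail than you do, this is not a gap relative to the paper, but it is the one point in your write-up where the substance is being assumed rather than proved.
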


One has a ``principal symbol" homomorphism $\sigma:\A^{\hb}_{T^*Y} \rar \mathcal O_{T^*Y}$.
The following theorem is from \cite{BNT} (see also \cite{BNT1} and \cite{BNT2}).

\begin{theorem} \la{t1}
The following diagram commutes.
$$\begin{CD}
\text{HC}_0^{per}(\A^{\hb}_{T^*Y}) @>\sigma>> \text{HC}_0^{per}(\mathcal O_{T^*Y})\\
@VV{\chi_{FFS}}V                       @V{I_{HKR}}VV\\
\prod_p \text{H}^{2n-2p}(T^*Y,\compl)((\hb)) @<(-) \cup \pi^*\text{Td}(T_Y) << \prod_p \text{H}^{2p}(T^*Y,\compl)((\hb))
\end{CD}$$
\end{theorem}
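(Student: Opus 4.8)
The plan is to deduce Theorem~\ref{t1} from the algebraic index theorem of Bressler--Nest--Tsygan \cite{BNT},\cite{BNT1},\cite{BNT2}, specialised to the (holomorphic) symplectic manifold $T^*Y$ with its canonical symplectic form $\omega=d\lambda$ and to the deformation quantization $\A^{\hb}_{T^*Y}$ --- the one whose sheaf of ``$\hb$-differential operators'' recovers $\mathcal D_Y$, with $\sigma$ the reduction modulo $\hb$. First I would put $\A^{\hb}_{T^*Y}$ in Fedosov form, as a bundle of completed Weyl algebras over $T^*Y$ carrying a flat connection, so that the trace density $\chi_{FFS}$ is represented by an explicit de~Rham cocycle and hence by a characteristic class $c(\A^{\hb}_{T^*Y})\in H^{\mathrm{ev}}(T^*Y,\compl((\hb)))$ of the quantization. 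The content of the BNT theorem is the commutativity of the square whose bottom arrow is $(-)\cup c(\A^{\hb}_{T^*Y})$, so the whole statement reduces to the characteristic-class identity $c(\A^{\hb}_{T^*Y})=\pi^*\text{Td}(T_Y)$.

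To prove that identity I would use two features peculiar to a cotangent bundle. Because $\omega=d\lambda$ is exact, $[\omega]=0$ in $H^2(T^*Y)$, so the singular $\hb^{-1}$-part of the Deligne (Fedosov) class of the quantization is cohomologically trivial and the exponential of the Deligne class contributes only genuine classes of $T^*Y$. Because $\pi\colon T^*Y\to Y$ deformation-retracts onto its zero section, $\pi^*\colon H^\bullet(Y)\xrightarrow{\sim}H^\bullet(T^*Y)$, and under this isomorphism every characteristic class entering $c(\A^{\hb}_{T^*Y})$ descends to characteristic classes of $T_Y$ and $T_Y^*$; thus $c(\A^{\hb}_{T^*Y})=\pi^* c_0$, where $c_0\in H^{\mathrm{ev}}(Y,\compl)$ is produced, via the local index formula of \cite{BNT} (equivalently that of Feigin--Felder--Shoikhet), from an $\widehat A$-type factor built from $T_Y$ and the exponential of the Deligne class. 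Finally, I would compute the Deligne class of $\A^{\hb}_{T^*Y}$: relative to the symmetrised (half-form) quantization --- whose Deligne class vanishes since $[\omega]=0$ --- it is the metaplectic correction $\tfrac12 c_1(T_Y)$. Feeding this into the local index formula and carrying out the Chern-root bookkeeping (the factors recombining via $\text{Td}(E)=\widehat A(E)e^{c_1(E)/2}$) yields $c_0=\text{Td}(T_Y)$. This last step is exactly the Riemann--Roch--Hirzebruch formula for traces of differential operators of Engeli--Felder, whose explicit local computation is the analytic heart of the argument.

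The hard part is the second paragraph, not the first. The BNT index theorem is usually phrased as a statement about the pairing with $K_0(\A^{\hb})$ or about the canonical trace $\text{HH}_0\to\compl$, so some care is needed to rephrase it for periodic cyclic homology and to check that the class it outputs is cup product with the characteristic class asserted here; and then one must correctly identify the Deligne class of the specific quantization $\A^{\hb}_{T^*Y}$ and verify the ensuing characteristic-class identity without sign or normalisation errors. An alternative route, which sidesteps computing the Deligne class directly, is to argue by naturality: $c(\A^{\hb}_{T^*Y})=\pi^* c_0$ for a universal power series $c_0$ in the Chern classes of $T_Y$, independent of $Y$, and one then pins down $c_0=\text{Td}(T_Y)$ by comparison with the classical Riemann--Roch theorem for $\mathcal D$-modules on a few test varieties (e.g.\ $\P^1$ and abelian varieties). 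Either way the essential input --- the evaluation of the FFS trace density --- is precisely what \cite{BNT},\cite{BNT1},\cite{BNT2} provide, and I would ultimately invoke it.
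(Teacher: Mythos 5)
The paper gives no proof of Theorem \ref{t1} at all: it is imported verbatim from \cite{BNT} (see also \cite{BNT1}, \cite{BNT2}), with the sentence ``The following theorem is from [BNT]'' serving as the entire argument. Your proposal therefore does strictly more than the paper, namely it reconstructs how the general Bressler--Nest--Tsygan algebraic index theorem specializes to the quantization $\A^{\hb}_{T^*Y}$. The outline is the right one and matches how \cite{BNT1},\cite{BNT2} are organized: the general theorem (which those papers do state at the level of periodic cyclic homology, not just $K_0$, so that worry of yours is unfounded) identifies $\chi_{FFS}$ composed with (an inverse of) $\sigma$ with cup product against a characteristic class of the quantization built from an $\widehat{A}$-factor and the exponential of the Deligne--Fedosov class; exactness of the canonical $1$-form and the retraction of $T^*Y$ onto its zero section force this class to be $\pi^*c_0$ for a universal series $c_0$ in the Chern classes of $T_Y$; and pinning down $c_0=\mathrm{Td}(T_Y)$ is the Engeli--Felder/FFS computation. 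What the bare citation buys the paper is safety and brevity; what your version buys is an actual explanation of where the Todd class comes from, plus a usable fallback (naturality of $c_0$ plus evaluation on test varieties) if one distrusts the normalizations.

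The one concrete soft spot is the Chern-root bookkeeping in your final step, and it is not merely a matter of signs. Since $T(T^*Y)\cong\pi^*(T_Y\oplus T_Y^*)$, the $\widehat{A}$-factor of the ambient symplectic manifold is $\pi^*\widehat{A}(T_Y)^2=\pi^*\bigl(\mathrm{Td}(T_Y)\,\mathrm{Td}(T_Y^*)\bigr)$, so the recombination you propose via $\mathrm{Td}(E)=\widehat{A}(E)e^{c_1(E)/2}$ with the metaplectic shift $\tfrac12 c_1(T_Y)$ yields $\pi^*\bigl(\mathrm{Td}(T_Y)\widehat{A}(T_Y)\bigr)$ rather than $\pi^*\mathrm{Td}(T_Y)$. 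Obtaining a single Todd class requires using the normalization of $\chi_{FFS}$ actually employed here (a trace density adapted to the Lagrangian fibration $\pi$, which absorbs the $\widehat{A}$- or $\mathrm{Td}$-contribution of the fibre direction $T_Y^*$), exactly as in the microlocal Euler class formulation of \cite{BNT}. So your plan is viable, but as written the characteristic-class identity at its heart does not yet close; either track that fibre-direction normalization explicitly or fall back on your test-case argument to fix $c_0$.
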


\begin{prop} \la{seven}
The following diagrams commute.
$$\begin{CD}
\text{HC}_0^{per}(\mathcal O_Y) @>\pi^{-1}>> \text{HC}_0^{per}(\A^{\hb}_{T^*Y})\\
@VV{\pi^*}V  @V{\text{id}}VV\\
\text{HC}_0^{per}(\mathcal O_{T^*Y}) @<\sigma<< \text{HC}_0^{per}(\A^{\hb}_{T^*Y})
\end{CD}$$

$$\begin{CD}
\text{HC}_0^{per}(\mathcal O_Y) @>\iota>> \text{HC}_0^{per}(\mathcal D_Y)\\
@VV{\pi^{-1}}V  @V{\pi^{-1}}VV\\
\text{HC}_0^{per}(\A^{\hb}_{T^*Y}) @>\beta>> \text{HC}_0^{per}(\A^{\hb}_{T^*Y}[\hb^{-1}])
\end{CD}$$

$$\begin{CD}
\text{HC}_0^{per}(\mathcal O_Y) @>\pi^*>> \text{HC}_0^{per}(\mathcal O_{T^*Y})\\
@VV{I_{HKR}}V  @V{I_{HKR}}VV\\
\prod_p \text{H}^{2p}(Y,\compl) @>\pi^*>> \prod_p \text{H}^{2p}(T^*Y,\compl)((\hb))
\end{CD}$$
\end{prop}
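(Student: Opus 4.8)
The plan is to derive all three squares from two standard facts: that periodic cyclic homology is functorial in morphisms of (sheaves of) algebras and in refinements of the open covers used to build the Cech-cyclic complexes, and that the HKR quasi-isomorphism and the de Rham comparison are natural. In each case one recognises the horizontal and vertical maps on $\text{HC}_0^{per}$ as those induced by explicit algebra morphisms, reduces commutativity to that of the corresponding square of algebra morphisms, and invokes functoriality. The only point needing attention is that each copy of ``$\pi^{-1}$'' on periodic cyclic homology is, by construction, computed through the refinement along $\pi$ of a good cover $\mathcal U=\{U_i\}$ of $Y$ by the cover $\mathcal V=\{\pi^{-1}(U_i)\}$ of $T^*Y$, so that the squares of algebra morphisms below genuinely induce the asserted squares on $\text{HC}_0^{per}$.

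For the first square I would note that $\pi^{-1}\colon\text{HC}_0^{per}(\mathcal O_Y)\to\text{HC}_0^{per}(\A^{\hb}_{T^*Y})$ is induced by the canonical embedding of $\pi^{-1}\mathcal O_Y$ into $\A^{\hb}_{T^*Y}$ (the restriction of $\pi^{-1}\mathcal D_Y\hookrightarrow\A^{\hb}_{T^*Y}[\hb^{-1}]$ to functions on the base), and that the principal symbol $\sigma$ is a homomorphism of sheaves of algebras sending the image of a base function $f$ back to $\pi^*f\in\mathcal O_{T^*Y}$. Thus the composite $\pi^{-1}\mathcal O_Y\hookrightarrow\A^{\hb}_{T^*Y}\xrightarrow{\ \sigma\ }\mathcal O_{T^*Y}$ is precisely the structure morphism defining $\pi^*$, and passing to periodic cyclic homology gives $\sigma\circ\pi^{-1}=\pi^*$. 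For the second square one checks that the square of sheaf-algebra morphisms
\[
\begin{CD}
\pi^{-1}\mathcal O_Y @>\pi^{-1}\iota>> \pi^{-1}\mathcal D_Y\\
@VVV @VVV\\
\A^{\hb}_{T^*Y} @>\beta>> \A^{\hb}_{T^*Y}[\hb^{-1}]
\end{CD}
\]
commutes, the vertical maps being the canonical embeddings: both composites carry a base function $f$ to the image in $\A^{\hb}_{T^*Y}[\hb^{-1}]$ of $f$ viewed as an operator of order zero. Applying periodic cyclic homology, again through the covers $\mathcal U$ and $\mathcal V$, yields $\beta\circ\pi^{-1}=\pi^{-1}\circ\iota$.

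For the third square I would use that the HKR map is given at the chain level by the antisymmetrization $a_0\otimes a_1\otimes\cdots\otimes a_k\mapsto\frac{1}{k!}\,a_0\,da_1\wedge\cdots\wedge da_k$, which is visibly compatible with pullback along the algebra morphism $\pi^{-1}\mathcal O_Y\to\mathcal O_{T^*Y}$, and that the identification of $\text{HC}_0^{per}$ with de Rham (hence with $\compl$-) cohomology, via the de Rham differential and the Poincar\'e lemma, is likewise natural; composing these gives $I_{HKR}\circ\pi^*=\pi^*\circ I_{HKR}$, where the right-hand $\pi^*$ is the ordinary pullback on cohomology, extended $\compl((\hb))$-linearly. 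I expect the only real obstacle to be the routine but fiddly bookkeeping: one must make sure that each ``$\pi^{-1}$'' on periodic cyclic homology is literally realised by the refinement map of Cech-cyclic complexes attached to $\mathcal U$ and $\mathcal V$, and that the embeddings of $\pi^{-1}\mathcal O_Y$ and $\pi^{-1}\mathcal D_Y$ are compatible with $\beta$, $\sigma$ and $\iota$ on the nose (not merely up to homotopy), while keeping track of the $\hb$-adic completions and localizations. Once the constructions recalled above are unwound this is immediate, and all three diagrams commute.
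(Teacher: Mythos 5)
The paper states Proposition \ref{seven} without any proof (it is treated as a routine consequence of the constructions just given), so there is nothing to compare against; your argument --- identifying each map on $\text{HC}_0^{per}$ as induced by an explicit morphism of sheaves of algebras through the fixed covers $\mathcal U$ and $\mathcal V=\{\pi^{-1}(U_i)\}$, and then invoking functoriality of the Cech--periodic-cyclic complex together with naturality of $I_{HKR}$ and of the de Rham comparison --- is correct and is evidently exactly the verification the author had in mind. The one point worth making explicit in a write-up is the one you already flag: that the composite $\pi^{-1}\mathcal O_Y\hookrightarrow\A^{\hb}_{T^*Y}\stackrel{\sigma}{\to}\mathcal O_{T^*Y}$ really is the structure map of $\pi$ (the quantization of a base function has principal symbol $\pi^*f$), which is what reduces the first square to functoriality.
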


Denote the bottom arrow in the diagram of equation \eqref{p2anal} by $\theta$ (after extending scalars to $\compl((\hb))$ in the codomain).
Since $I_{HKR}:\text{HC}_0^{per}(\mathcal O_Y)  \rar \prod_p \text{H}^{2p}(Y,\compl)$ is an isomorphism, Propositions \ref{six},\ref{seven} and Theorem \ref{t1} together imply that
$$ \theta(\alpha)=i^*(\pi^*(\alpha) \cup \pi^*(\text{Td}(T_Y)))=i^* \pi^* (\alpha \cup \text{Td}(T_Y))=\alpha \cup \text{Td}(T_Y) \text{.}$$
This proves Proposition \ref{two}.

\end{document}